\newcommand{\be}{\begin{equation}}
\newcommand{\ee}{\end{equation}}
\newtheorem{theorem}{Theorem}[section]
\newtheorem{lemma}{Lemma}[section]
\newtheorem{remark}{Remark}[section]
\newtheorem{teor}[theorem]{Theorem}
\newtheorem{defi}[theorem]{Definition}
\newtheorem{prop}[theorem]{Proposition}
\newtheorem{coro}[theorem]{Corollary}
\newtheorem{notation}[theorem]{Notation}
\newtheorem{ex}[theorem]{Example}
\title{FILTERS AND COMPACTNESS ON  SMALL CATEGORIES AND LOCALES}                                  \author{Joaqu\'in Luna-Torres}
\thanks{Programa de Matem\'aticas, Universidad Distrital Francisco Jos\'e de Caldas,  Bogot\'a D. C., Colombia (retired professor)}
\email{jlunator@fundacionhaiko.org}
\subjclass{18A35, 18F10, 18F70, 54A20, 54D30}
\keywords{ Filter, Bases of  filters, Ultrafilter, $S$-filter,  Cover-neighborhood,  $\mathfrak G$-neighborhood, Grothendieck topology, Convergence, Compactness, Frames,  Locales}
\begin{document} 

\begin{abstract}
In analogy with the classical theory of filters, for fi\-nite\-ly complete or small cat\-e\-go\-ries, we provide  the concepts of fil\-ter, $\mathfrak{G}$-neigh\-bor\-hood (short for ``Grothendieck-neigh\-bor\-hood")  and  cover-neigh\-bor\-hood of points of such categories,  with the goal of studying convergence, cluster point, closure of  sieves and compactness on objects  of that kind of categories. Finally, we study all these concepts in the category $\bf{Loc}$ of locales.
\end{abstract}

\maketitle 

\section{ Introduction}
Convergence theory offers a versatile and effective framework for some areas of mathematics. Let us start by saying a few words about the history of this concept.

Convergence theory was probably defined firstly by  Henri Cartan \cite{HC}.

The notion of a limit along a filter was defined in his work  in the maximum
generality, as a filter on an arbitrary set and the limit defined for any map from this set to a topological space. However, the attention of mathematicians in the following years was mostly focused on two special cases.

\begin{itemize}
\item In general topology the notion of limit of a filter on a topological space $X$ became one of the two basic tools used to describe the convergence in general topological spaces together with the notion of a net.
Some authors studied also the convergence of a sequence along a filter.  
 
\item The definition of the limit along a filter can be reformulated using ideals – the dual notion
 of filter -. This type of limit of sequences was introduced independently by P. Kostyrko et alt., \cite{KM}
 and F. Nuray and W. H. Ruckle \cite{NR} and studied under the name {\it{`` I-convergence"}}. The motivation for this direction of research was an effort to generalize some known results on statistical convergence. 
\end{itemize}
In category theory,  a sieve is a way of choosing arrows with a common codomain. It is a categorical analogue of a collection of open subsets of a fixed open set in topology. In a Grothendieck topology, certain sieves become categorical analogues of open covers in general topology. 

In this paper, we use the concept of sieve to build filters in categories and locales;
 we explore the relationship between filters and Grothendieck topologies,  definig the concept of $\mathfrak{G}$-convergence, in order to carry out the study of compactness.

The paper is organized as follows: We describe, in section $2$, the notion of sieve as in S. MacLane and I. Moerdijk \cite{MM} . In section $3$, we present the concepts of filters, filter base and we  study the lattice structure of  all filters on a category and we present the concept of ultrafilter; after, in section $4$, we establish a connection between filters and Grothendieck topologies in the same category. In section $5$ we introduce the concepts of systems of neighborhood, $\mathfrak{G}$-neighborhood  of a point (recall that a point is an arrow with domain a terminal object), cover-neighborhood, convergence, cluster point and closure of a sieve and some propositions about them . In section $6$ the notion of filter-preserving (or continuous) functor is presented; next, in section $7$ we use the convergence of ultrafilters in order to define compact objects in the categories in question. Finally, in section $8$, we study all the previous concepts in the category of locales.

\section{Theoretical Considerations}
In the first part of this paper, we will work within an ambient category $\mathscr{C} $ which is finitely complete, later we will do it on a small category.

From S. MacLane and I. Moerdijk \cite{MM}, Chapter III, we have the following:

Let $\mathscr{C}$ be a category and let $C$ be an object of $\mathscr{C}$. 
A sieve $\mathcal S$ on $C$ is a family of morphisms in $\mathscr{C}$, all with codomain $C$, such that $ f \in  \mathcal S \Longrightarrow f\circ g \in \mathcal S
$
whenever this composition makes sense; in other words, $\mathcal S$ is a right ideal. 

If $\mathcal S$ is a sieve on $\mathscr C$ and $h: D\rightarrow C$ is any arrow to $C$, then\linebreak  $h^{*}(\mathcal S) = \{g \mid cod(g) = D,\,\ h\circ g \in \mathcal S\}$  is a sieve on $D$.

The set  $Sieve(C)$, of all sieves on $C$, is a partially ordered set under inclusion. It is easy to see that the union or intersection of any family of sieves on $C$ is a sieve on $C$, so $Sieve(C)$ is a complete lattice.

\section{Filters on a category}
\begin{defi}
 A filter on a category $\mathscr{C}$ is a function $\mathfrak F$ which assigns to each object $C$ of $\mathscr{C}$ a  collection $\mathfrak F(C) $ of sieves on $C$, in such a way that
\begin{enumerate}
\item [($F_1$)] If $S \in \mathfrak F(C)$ and $R$ is a sieve on $C$ such that $ S \subseteq R$, then  $ R \in \mathfrak F(C)$;
\item [($F_2$)] every finite intersection of sieves of $\mathfrak F(C)$ belongs to $\mathfrak F(C)$;
\item [($F_3$)] if $S\in \mathfrak F(C)$, then  $h^{*}(S)\in \mathfrak F(D)$ for any arrow  $h: D\rightarrow C$;
\item [($F_4$)] the empty sieve is not in $\mathfrak F(C)$.
\end{enumerate}
\end{defi}
The pair $(C, \mathfrak F(C))$ will be called {\bf a filtered object}.
\begin{ex}
From the definition of a Grothendieck topology $J$ on a category $\mathscr{C}$  it follows that for each object $C$ of $\mathscr{C}$ and that
\begin{itemize}
\item for $S\in J(C)$ any larger sieve $R$ on $C$ is also a member of $J(C)$;
\item it is also clear that if $R; S\in J(C)$ then $R\cap S \in J(C)$; 
\item consequently some Grothendieck topologies produce  filters in the same category $\mathscr{C}$: they are exactly those for which   
$ R\cap S\neq\emptyset$ for all pairs $ R; S\in J(C)$  and such that  the empty sieve is not in $J(C)$.
\item Clearly, the {\bf trivial topology} on $\mathscr{C}$ is a filter we shall call it {\bf trivial filter}.
\item It is also clear that the {\bf atomic topology} on $\mathscr{C}$ (see \cite{MM})  is not a filter.
\end{itemize}
\end{ex}
\begin{remark}
According to the previous example, for any site $(\mathscr{C}, J)$ there is a dense sub-site, given by the full subcategory of $\mathscr{C}$ on the objects which are not covered by the empty sieve with the induced topology, whose topos of sheaves is equivalent to Sh(C; J). This is an immediate application of the Comparison Lemma (see \cite{PJ1} Theorem 2.2.3).
\end{remark}

\begin{defi}
A filter subbase on a category $\mathscr{C}$ is a function $\mathfrak S$ which assigns to each object $C$ of $\mathscr{C}$ a  collection $\mathfrak S(C) $ of sieves on $\mathscr{C}$,  in such a way that no finite subcollection of $\mathfrak S(C) $ has an empty intersection.
\end{defi}
An immediate consequence of this definition is
\begin{prop}
A sufficient condition that there should exist a filter $\mathfrak S^{'}$  on a category $\mathscr{C}$ greater than or equal to  a function $\mathfrak S$ (as above) is that $\mathfrak S$  should be a filter subbase on  $\mathscr{C}$.
\end{prop}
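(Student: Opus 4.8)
The plan is to imitate the classical construction of the filter generated by a subbase, modified so as to respect the fibred axiom $(F_3)$. The whole argument rests on two elementary properties of the pullback operation on sieves, which I would record first. For $h\colon D\to C$ the definition $h^{*}(S)=\{g\mid cod(g)=D,\ h\circ g\in S\}$ gives at once that $h^{*}$ preserves inclusions and commutes with finite intersections, $h^{*}(S_1\cap\cdots\cap S_n)=h^{*}(S_1)\cap\cdots\cap h^{*}(S_n)$, while for a second arrow $k\colon B\to D$ it gives the functoriality law $k^{*}(h^{*}(S))=(h\circ k)^{*}(S)$. Both are immediate from the definition.

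Because $(F_3)$ forces every pullback of a member of the filter to be a member, I would not generate the filter from $\mathfrak S$ directly but from its pullback-closure. For each object $C$ put
\[
\mathcal G(C)=\{\,h^{*}(S)\mid h\colon C\to C'\text{ in }\mathscr C,\ S\in\mathfrak S(C')\,\}.
\]
Taking $h=\mathrm{id}_C$ shows $\mathfrak S(C)\subseteq\mathcal G(C)$, and the functoriality law shows that $\mathcal G$ is stable under pullback. I would then define $\mathfrak S'(C)$ to be the set of all sieves on $C$ that contain some finite intersection of members of $\mathcal G(C)$; this is the natural candidate for the least filter dominating $\mathfrak S$.

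Three of the four axioms are then formal. $(F_1)$ is immediate from the definition. $(F_2)$ holds because the intersection of two finite intersections of members of $\mathcal G(C)$ is again such a finite intersection. For $(F_3)$, take $R\in\mathfrak S'(C)$ with $R\supseteq h_1^{*}(S_1)\cap\cdots\cap h_n^{*}(S_n)$ and an arrow $k\colon D\to C$; applying $k^{*}$ and using that it preserves inclusions and finite intersections together with the functoriality law yields $k^{*}(R)\supseteq (h_1\circ k)^{*}(S_1)\cap\cdots\cap(h_n\circ k)^{*}(S_n)$, a finite intersection of members of $\mathcal G(D)$, so $k^{*}(R)\in\mathfrak S'(D)$.

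The main obstacle is $(F_4)$, the only step that is not purely formal. I must exclude the empty sieve from every $\mathfrak S'(C)$, that is, show that no finite intersection $h_1^{*}(S_1)\cap\cdots\cap h_n^{*}(S_n)$ of members of $\mathcal G(C)$ is empty. This is precisely where the subbase hypothesis has to be spent: the finite intersection property is assumed for each $\mathfrak S(C')$ separately, whereas the sieves appearing here are pullbacks collected from possibly different objects, and a pullback can in principle collapse a non-empty sieve. I would therefore reduce non-emptiness of such an intersection to non-emptiness of intersections inside a single $\mathfrak S(C')$, using finite completeness of $\mathscr C$ to manufacture the arrow witnessing that the pullbacks are jointly inhabited; carrying out this reduction cleanly is the heart of the proof, and it is what genuinely distinguishes the categorical statement from its classical prototype.
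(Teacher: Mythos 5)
Your construction of $\mathfrak S'$ (upward closure of finite intersections of pullbacks of subbase sieves) is sound, and your verifications of $(F_1)$, $(F_2)$ and $(F_3)$ are correct. But the proof stops exactly where you yourself locate ``the heart of the proof'': axiom $(F_4)$ is announced, not established. A proof that ends with ``I would therefore reduce non-emptiness \dots carrying out this reduction cleanly is the heart of the proof'' has not proved the statement, so as written there is a genuine gap. (For what it is worth, the paper supplies no argument at all here --- it labels the proposition an immediate consequence of the definition --- so there is no hidden argument in the text to lean on.)

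The gap is fillable, and more easily than your closing paragraph suggests, because the ambient hypothesis of this part of the paper is that $\mathscr C$ is finitely complete. Given $h_i\colon C\to C_i$ and $S_i\in\mathfrak S(C_i)$ for $i=1,\dots,n$, each $S_i$ is non-empty (apply the subbase condition to the singleton subcollection $\{S_i\}$), so choose $f_i\colon A_i\to C_i$ in $S_i$. The diagram consisting of the objects $C$, $A_i$, $C_i$ and the arrows $h_i\colon C\to C_i$, $f_i\colon A_i\to C_i$ is finite, hence has a limit $P$, with projections $q\colon P\to C$ and $p_i\colon P\to A_i$ satisfying $h_i\circ q=f_i\circ p_i$. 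Since $S_i$ is a right ideal and $f_i\in S_i$, we get $h_i\circ q\in S_i$, i.e.\ $q\in h_i^{*}(S_i)$ for every $i$; hence $q\in h_1^{*}(S_1)\cap\cdots\cap h_n^{*}(S_n)$, so this intersection is non-empty and $(F_4)$ holds. Note what this argument shows: in a finitely complete category a pullback of a non-empty sieve is never empty, so the collapse you worry about cannot occur, and the only part of the subbase hypothesis your construction actually consumes is that each individual sieve in $\mathfrak S(C)$ is non-empty. Finite completeness is genuinely needed, though: in the category with three objects $A$, $D$, $C$, arrows $f\colon A\to C$ and $h\colon D\to C$, and no other non-identity arrows, $\{f\}$ is a non-empty sieve on $C$ whose pullback along $h$ is empty, so no filter at all can contain it and the proposition fails.
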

Observe that $\mathfrak S^{'}$ is the coarset filter greater than $\mathfrak S$.
\begin{defi}
A basis of a filter on a category $\mathscr{C}$ is a function $\mathfrak B$ which assigns to each object $C$ of $\mathscr{C}$ and   collection $\mathfrak B(C) $ of sieves on $\mathscr{C} $, in such a way that 
\begin{enumerate}
\item[($B_1$)] The intersection of two sieves of $\mathfrak B(C)$ contains a sieve of $\mathfrak B(C)$;
\item[($B_2$)] if $\mathcal S$ is a sieve on $\mathfrak B$ and $h: D\rightarrow C$ is any arrow to $C$, then  $h^{*}(\mathcal S) = \{g \mid cod(g) = D,\,\ h\circ g \in \mathcal S\}$  is a sieve on $\mathfrak B(D)$;
\item[$(B_3)$] $\mathfrak B(C)$ is not empty, and the empty sieve is not in $\mathfrak B(C)$.
\end{enumerate}
\end{defi}
\begin{prop}
If $\mathfrak B$ is a basis of filter on a category $\mathscr{C} $, then  $\mathfrak B$ generates a filter $\mathfrak F$ by
\[
S \in \mathfrak F(C) \Leftrightarrow \exists R\in \mathfrak B(C)\,\ \text{such that}\,\ R\subseteq S
\]
for each object $C$ of $\mathscr{C} $.
\end{prop}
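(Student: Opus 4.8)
The plan is to verify that the function $\mathfrak F$ defined by the stated rule satisfies the four filter axioms $(F_1)$--$(F_4)$, drawing in each case on the corresponding basis axiom $(B_1)$--$(B_3)$ together with one elementary fact about pullbacks that I would record first: the operation $h^{*}$ is monotone with respect to inclusion of sieves. Indeed, if $R\subseteq S$ are sieves on $C$ and $h\colon D\to C$ is any arrow, then $g\in h^{*}(R)$ means $h\circ g\in R\subseteq S$, whence $g\in h^{*}(S)$; thus $h^{*}(R)\subseteq h^{*}(S)$. This one-line observation is used in the proof of $(F_3)$.

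For $(F_1)$, I would take $S\in\mathfrak F(C)$ and a sieve $R$ on $C$ with $S\subseteq R$. By definition there is $R_0\in\mathfrak B(C)$ with $R_0\subseteq S$, and then $R_0\subseteq S\subseteq R$ exhibits $R\in\mathfrak F(C)$ at once. For $(F_4)$, if the empty sieve belonged to $\mathfrak F(C)$ there would be $R_0\in\mathfrak B(C)$ with $R_0\subseteq\emptyset$, forcing $R_0=\emptyset$ and contradicting $(B_3)$.

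The substantive axiom is $(F_2)$, and by an easy induction it suffices to treat a binary intersection. Given $S_1,S_2\in\mathfrak F(C)$, I would choose $R_1,R_2\in\mathfrak B(C)$ with $R_i\subseteq S_i$; axiom $(B_1)$ then supplies $R_3\in\mathfrak B(C)$ with $R_3\subseteq R_1\cap R_2\subseteq S_1\cap S_2$, so $S_1\cap S_2\in\mathfrak F(C)$. (If one admits the empty intersection, it is the maximal sieve on $C$, which lies in $\mathfrak F(C)$ precisely because $\mathfrak B(C)$ is nonempty by $(B_3)$.) Finally, for $(F_3)$, I would take $S\in\mathfrak F(C)$ and an arrow $h\colon D\to C$, and pick $R\in\mathfrak B(C)$ with $R\subseteq S$. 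Monotonicity of $h^{*}$ gives $h^{*}(R)\subseteq h^{*}(S)$, while $(B_2)$ gives $h^{*}(R)\in\mathfrak B(D)$; hence $h^{*}(S)\in\mathfrak F(D)$.

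I do not anticipate a genuine obstacle, since every step reduces to chasing inclusions through the base axioms. The one point that warrants care is the exact reading of $(B_2)$: the argument for $(F_3)$ needs that $h^{*}$ carries a member of $\mathfrak B(C)$ to a member of $\mathfrak B(D)$, and I would make that interpretation explicit before invoking it. A secondary subtlety is whether $(F_2)$ is intended to cover the empty intersection; if so, its membership rests on the nonemptiness clause of $(B_3)$, which is exactly why that clause is included.
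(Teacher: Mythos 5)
Your proof is correct and is exactly the verification the paper has in mind: the paper itself offers no details, saying only ``It is easy to check that this, indeed, defines a filter from a basis $\mathfrak B$,'' and your axiom-by-axiom check (including the monotonicity of $h^{*}$ for $(F_3)$ and the use of $(B_3)$ to rule out the empty sieve) supplies precisely those omitted details. The attention you give to the reading of $(B_2)$ and to the empty-intersection case is a sensible clarification rather than a deviation from the paper's approach.
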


It is easy to check that this, indeed, defines a filter from a basis $\mathfrak B$.

\subsection{The ordered set of all filters on a category}
\begin{defi}
Given two filters $\mathfrak F_1$,\,\ $\mathfrak F_2$ on the same category $\mathscr{C}$,  $\mathfrak F_2$ is said to be finer than $\mathfrak F_1$, or $\mathfrak F_1$ is coarser than  $\mathfrak F_2$, if  $\mathfrak F_1(C) \subseteq  \mathfrak F_2(C)$ for all  $C$  object of  $\mathscr{C}$.
\end{defi}
In this way, the set of all filters on a category $\mathscr{C}$ is ordered by the relation {\bf{ ``$\mathfrak F_1$ is coarser than  $\mathfrak F_2$"}}.

Let $(\mathfrak F_i)_{i\in I}$ be a nonempty family of filters on a category $\mathscr{C}$; then the function $\mathfrak F$ which assigns to each object $C$ the collection\linebreak $\mathfrak F(C)=\bigcap_{i\in I}\mathfrak F_i(C)$ is manifestly a filter on  $\mathscr{C}$ and is obviously the greatest lower bound of the family $(\mathfrak F_i)_{i\in I}$ on the ordered set of all filters on   $\mathscr{C}$.
\begin{defi}
An ultrafilter on  a category $\mathscr{C}$ is a filter  such that there is no filter on $\mathscr{C}$ which is strictly finer than $\mathfrak U$.
\end{defi}
Using the Zorn lemma, we deduce that
\begin{prop}\label{finer}
If $\mathfrak F$ is any filter on a category $\mathscr{C}$,  there is an ultrafilter finer than $\mathfrak F$ on $\mathscr{C}$.
\end{prop}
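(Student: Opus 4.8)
The plan is to apply Zorn's lemma to the collection of all filters on $\mathscr{C}$ that are finer than $\mathfrak F$. First I would set
$\Phi = \{\mathfrak F' \mid \mathfrak F' \text{ is a filter on } \mathscr{C} \text{ and } \mathfrak F \text{ is coarser than } \mathfrak F'\}$,
partially ordered by the ``coarser than'' relation introduced above. This set is nonempty since $\mathfrak F \in \Phi$. The key observation is that a maximal element of $\Phi$ is precisely an ultrafilter finer than $\mathfrak F$: any filter strictly finer than such a maximal element would again be finer than $\mathfrak F$, hence would lie in $\Phi$, contradicting maximality; so no strictly finer filter can exist.

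To invoke Zorn, I would take an arbitrary chain $(\mathfrak F_j)_{j\in J}$ in $\Phi$ and produce an upper bound by forming the objectwise union $\mathfrak G(C)=\bigcup_{j\in J}\mathfrak F_j(C)$ for every object $C$ (the empty chain is bounded by $\mathfrak F$ itself). The bulk of the work is to check that $\mathfrak G$ is again a filter, that is, that it satisfies $(F_1)$--$(F_4)$. Axioms $(F_1)$, $(F_3)$ and $(F_4)$ are routine and use nothing beyond the fact that each $\mathfrak F_j$ is a filter: if a sieve $S$ lies in $\mathfrak G(C)$, then $S\in\mathfrak F_j(C)$ for some $j$, and upward closure, stability under $h^{*}$, and exclusion of the empty sieve all transfer immediately from that single $\mathfrak F_j$ to the union.

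The one step that genuinely requires the chain hypothesis is the finite-intersection axiom $(F_2)$, and this is where I expect the only real care to be needed. Given finitely many sieves $S_1,\dots,S_n\in\mathfrak G(C)$, each $S_k$ belongs to some $\mathfrak F_{j_k}(C)$; because $(\mathfrak F_j)_{j\in J}$ is \emph{totally} ordered, among $\mathfrak F_{j_1},\dots,\mathfrak F_{j_n}$ there is a largest one, say $\mathfrak F_{j_0}$, and then all the $S_k$ lie in $\mathfrak F_{j_0}(C)$. Applying $(F_2)$ inside this single filter gives $\bigcap_k S_k\in\mathfrak F_{j_0}(C)\subseteq\mathfrak G(C)$. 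It is exactly here that a union over an arbitrary (non-chain) family of filters would fail, which is why the argument is organized around chains rather than directed families. Finally $\mathfrak G$ is finer than $\mathfrak F$ and finer than every $\mathfrak F_j$, so it is an upper bound for the chain in $\Phi$. With the chain condition verified, Zorn's lemma yields a maximal element $\mathfrak U\in\Phi$, which by the observation of the first paragraph is an ultrafilter finer than $\mathfrak F$, completing the proof.
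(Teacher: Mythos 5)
Your proposal is correct and follows exactly the route the paper intends: the paper's own ``proof'' is just the one-line remark ``Using the Zorn lemma, we deduce that,'' and your argument is the standard fleshed-out version of that invocation, with the chain-union construction and the verification of $(F_1)$--$(F_4)$ (including the correct observation that only $(F_2)$ genuinely needs the total-ordering hypothesis) supplied in full. No gaps; if anything, your write-up is more complete than the paper's.
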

\begin{prop}
Let $\mathfrak{U}$ be an ultrafilter on a category $\mathscr{C}$, and let $C$ be an object of $\mathscr{C}$. Let $ S,T$ be sieves on $C$ such that $S\cup T \in \mathfrak{U}(C)$ then either $S \in \mathfrak{U}(C)$ or $T \in \mathfrak{U}(C)$.
\end{prop}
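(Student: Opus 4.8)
The plan is to adapt the classical Bourbaki argument for ultrafilters on sets, exploiting maximality of $\mathfrak{U}$ to extract, from the assumption $S \notin \mathfrak{U}(C)$, a member of $\mathfrak{U}(C)$ that is disjoint from $S$. I would argue by contraposition: assume $S \notin \mathfrak{U}(C)$ and aim to prove $T \in \mathfrak{U}(C)$.

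First I would define a candidate function $\mathfrak{S}$ by setting $\mathfrak{S}(C) = \mathfrak{U}(C) \cup \{S\}$ and $\mathfrak{S}(D) = \mathfrak{U}(D)$ for every object $D \neq C$. If $\mathfrak{S}$ were a filter subbase, the earlier proposition on subbases would furnish a filter $\mathfrak{S}'$ with $\mathfrak{S}' \geq \mathfrak{S}$; this $\mathfrak{S}'$ would satisfy $\mathfrak{U}(D) \subseteq \mathfrak{S}'(D)$ for all $D$ and would contain $S$ in $\mathfrak{S}'(C)$. Since $S \notin \mathfrak{U}(C)$, the filter $\mathfrak{S}'$ would be strictly finer than $\mathfrak{U}$, contradicting the fact that $\mathfrak{U}$ is an ultrafilter. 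Hence $\mathfrak{S}$ is \emph{not} a filter subbase.

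Next I would locate the failure of the subbase condition. Because $\mathfrak{S}(D) = \mathfrak{U}(D)$ for $D \neq C$ and $\mathfrak{U}(D)$ is closed under finite intersections without containing the empty sieve (axioms $(F_2)$ and $(F_4)$), the only finite subcollections that can have empty intersection live in $\mathfrak{S}(C)$ and must involve $S$. Using once more that $\mathfrak{U}(C)$ is closed under finite intersection, any such offending subcollection collapses to $S \cap U$ for a single $U \in \mathfrak{U}(C)$. Thus the failure yields an element $U \in \mathfrak{U}(C)$ with $S \cap U = \emptyset$.

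Finally I would conclude by a short computation. Since $U$ and $S \cup T$ both lie in $\mathfrak{U}(C)$, axiom $(F_2)$ gives $U \cap (S \cup T) \in \mathfrak{U}(C)$; distributivity of intersection over union together with $S \cap U = \emptyset$ gives $U \cap (S \cup T) = U \cap T \subseteq T$, so upward closure $(F_1)$ yields $T \in \mathfrak{U}(C)$, as desired. I expect the only delicate point to be the appeal to maximality through the subbase proposition, namely checking that adjoining $S$ at the single object $C$ produces a subbase precisely when $S$ meets every member of $\mathfrak{U}(C)$, so that the pullback-stability axiom $(F_3)$ of the generated filter is absorbed entirely into the subbase construction rather than being verified by hand; the remaining steps are routine Boolean manipulations of sieves.
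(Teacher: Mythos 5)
Your overall strategy is the classical Bourbaki one, but implemented differently from the paper: the paper directly defines $\mathfrak{T}(C)=\{R\in Sieve(C)\mid R\cup S\in\mathfrak{U}(C)\}$ and argues that $\mathfrak{T}$ is a filter strictly finer than $\mathfrak{U}$, whereas you adjoin $S$ to $\mathfrak{U}(C)$, invoke the subbase proposition to contradict maximality, and extract a member $U\in\mathfrak{U}(C)$ with $U\cap S=\emptyset$. Your closing computation ($U\cap(S\cup T)=U\cap T\subseteq T$, hence $T\in\mathfrak{U}(C)$ by $(F_2)$ and $(F_1)$) is correct, and so is your reduction of the failure of the subbase condition to a single disjoint witness. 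The genuine gap sits exactly at the point you flag as ``the only delicate point'' and then wave off: in this setting the subbase condition does \emph{not} guarantee that a filter extension exists. Axioms $(F_3)$ and $(F_4)$ together force every sieve belonging to a filter to have a nonempty pullback along \emph{every} arrow, and the finite intersection property of $\mathfrak{S}(C)$, taken object by object, says nothing about the pullbacks $h^{*}(S\cap U)$ for arrows $h:D\rightarrow C$, which can be empty. So your dichotomy --- either $\mathfrak{S}$ is a subbase (contradiction with maximality) or some $U\in\mathfrak{U}(C)$ is disjoint from $S$ --- omits a third case: $\mathfrak{S}$ satisfies the subbase condition, yet no filter extends it because a pullback dies. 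In that case you get neither the contradiction nor the witness $U$.

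This third case actually occurs, and it even defeats the statement itself in the stated generality. Let $\mathscr{C}$ be the small category with objects $C,D,E$, identities, and exactly two nonidentity arrows $h:D\rightarrow C$, $f:E\rightarrow C$. The sieves on $C$ are $\emptyset,\{h\},\{f\},\{h,f\},t_{C}$, and a sieve $R$ on $C$ can belong to a filter only if $h\in R$ and $f\in R$, since otherwise $h^{*}(R)=\emptyset$ or $f^{*}(R)=\emptyset$. Hence $\mathfrak{U}$ with $\mathfrak{U}(C)=\{\{h,f\},t_{C}\}$, $\mathfrak{U}(D)=\{t_{D}\}$, $\mathfrak{U}(E)=\{t_{E}\}$ is an ultrafilter. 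Put $S=\{h\}$, $T=\{f\}$: then $S\cup T\in\mathfrak{U}(C)$ and $S$ meets every member of $\mathfrak{U}(C)$, so your $\mathfrak{S}$ \emph{is} a subbase in the paper's sense; nevertheless no filter contains $S$ (because $f^{*}(S)=\emptyset$), no member of $\mathfrak{U}(C)$ is disjoint from $S$, and neither $S$ nor $T$ lies in $\mathfrak{U}(C)$. So the argument cannot be repaired without extra hypotheses on $\mathscr{C}$ (an initial object $0$ suffices: every nonempty sieve on $C$ then contains $0\rightarrow C$, so nonempty sieves have nonempty pullbacks, and both the subbase proposition and your proof go through). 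In fairness, the paper's own proof founders on the same rock: it never says what $\mathfrak{T}$ is at objects other than $C$, and in the example above its $\mathfrak{T}(C)$ contains $\{f\}$, whose pullback along $h$ is empty, so $\mathfrak{T}$ extends to no filter at all.
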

\begin{proof}
If the affirmation is false, there exist sieves $ S,T$ on $C$ that do not belong to $ \mathfrak{U}(C)$, but $S\cup T \in \mathfrak{U}(C)$.
Consider a function  $\mathfrak{T}:\mathscr{C}\rightarrow Sets$  defined by
$
\mathfrak{T}(C)=\{ R\in \,\ Sieve(C)\mid R\cup S \in \mathfrak{U}(C)\}.
$
Let us verify that $\mathfrak{T}$ is a filter on a  $\mathscr{C}$: in fact, for any object  $C$  of the category $\mathscr{C}$, we have 
\begin{enumerate}
\item [($F_1$)] if $R^{'} \in \mathfrak T(C)$ then $R^{'} \cup S\in \mathfrak{U}(C)$; and if  $ R^{''}$ is a sieve on $C$ such that $ R{'}\subseteq R{''}$, then  $ R^{''}\cup S \in \mathfrak U(C)$. Consequently $R^{''} \in \mathfrak T(C)$.
\item [($F_2$)] We must show that every finite intersection of sieves of $\mathfrak T(C)$ belongs to $\mathfrak T(C)$; indeed,
 let $(R_i)_{i=1,\cdots,n} $
 be a finite collection of sieves on $C$ such that $ R_i\cup S\in \mathfrak{U}(C)$ for all $  i = 1. . . n$, then
$
(R_1\cup S)\cap (R_2\cup S)\cap \cdots \cap (R_n\cup S) = \left(\bigcap_{i=1}^{n}R_i\right)\cup S\in \mathfrak{U}(C).
$
which is equivalent to saying that 
$
\left(\bigcap_{i=1}^{n}R_i\right) \in \mathfrak T(C).
$
\item[($F_3$)] If $R^{'} \in \mathfrak T(C)$ then $R^{'} \cup S\in \mathfrak{U}(C)$; and $h^{*}(R^{'} \cup S)\in \mathfrak T(D)$ for any arrow  $h: D\rightarrow C$; in other words, $h^{*}(R^{'}) \cup h^{*}( S)\in \mathfrak T(D)$, therefore $h^{*}(R^{'})\in \mathfrak T(D)$.
\item [($F_4$)] Evidently, the empty sieve is not in $\mathfrak T(C)$.
 \end{enumerate}
Therefore $\mathfrak{T}$ is a filter finer than $\mathfrak{U}$, since $T\in \mathfrak T(C)$; but this contradicts the hypothesis than 
$\mathfrak{U}$ is an ultrafilter.
\end{proof}
\begin{coro}
If the union  of a finite sequence $(S_i)_{i=1,\cdots,n}$ of sieves on $C$ belongs to the image $\mathfrak{U}(C)$ under  an ultrafilter $\mathfrak{U}$, then at least one of the $S_i$ belongs to $\mathfrak{U}(C)$.
\end{coro}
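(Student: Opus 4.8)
The plan is to reduce the statement to the preceding proposition by induction on the length $n$ of the sequence. The case $n=1$ is trivial and the case $n=2$ is precisely the two-sieve proposition just established, so these serve as the base of the induction. For the inductive step I would assume the claim for every union of $n-1$ sieves and deduce it for a union of $n$ sieves.

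The observation that makes the induction run is recorded in Section 2: the union of any family of sieves on $C$ is again a sieve on $C$. Hence, given sieves $S_1,\dots,S_n$ on $C$, the set $R=\bigcup_{i=1}^{n-1}S_i$ is itself a sieve on $C$, and one has $\bigcup_{i=1}^{n}S_i=R\cup S_n$. Assuming $\bigcup_{i=1}^{n}S_i\in\mathfrak{U}(C)$, I would apply the two-sieve proposition to the pair $(R,S_n)$ to conclude that either $S_n\in\mathfrak{U}(C)$ or $R\in\mathfrak{U}(C)$.

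In the first case the conclusion holds immediately with the index $n$. In the second case $R=\bigcup_{i=1}^{n-1}S_i\in\mathfrak{U}(C)$ is a union of $n-1$ sieves lying in $\mathfrak{U}(C)$, so the inductive hypothesis furnishes an index $i\in\{1,\dots,n-1\}$ with $S_i\in\mathfrak{U}(C)$. In either case at least one $S_i$ belongs to $\mathfrak{U}(C)$, which completes the induction.

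Since the whole argument rests only on the two-sieve proposition together with the closure of sieves under finite unions, I do not anticipate a genuine obstacle. The single point requiring care is to confirm that $\bigcup_{i=1}^{n-1}S_i$ really is a sieve, so that the two-sieve proposition applies to it; this is guaranteed by the complete lattice structure of $Sieve(C)$ described in Section 2.
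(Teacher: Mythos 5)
Your proof is correct and follows essentially the same route as the paper, which simply invokes induction on $n$ using the preceding two-sieve proposition; you have merely filled in the details (the base cases, the decomposition $\bigcup_{i=1}^{n}S_i = \bigl(\bigcup_{i=1}^{n-1}S_i\bigr)\cup S_n$, and the fact from Section 2 that a union of sieves is a sieve) that the paper leaves implicit.
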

\begin{proof}
The proof is a simple use of induction on $n$.
\end{proof}

\section{Filters and Grothendieck topologies}
Our main aim in this section is to establish some  connections between filters and Grothendieck topologies in the same category.

First we need some observations.

\begin{defi}\ 
\begin{enumerate}
\item If $J_1$ and $J_2$ are Grothendieck topologies on a category $\mathscr{C}$, we say that $J_1 \preceq J_2$ if and only if $J_1(C)\subseteq J_2(C)$ for all objects $C$ of $ \mathscr{C}$.
\item In the same way, if $\mathfrak F_1$ and $\mathfrak F_2$ are filters on a category $\mathscr{C}$, we say that  $\mathfrak F_1 \preceq \mathfrak F_2$ if and only if $\mathfrak F_1(C)\subseteq \mathfrak F_2(C)$ for all objects $C$ of $ \mathscr{C}$.
\end{enumerate}
 \end{defi}
 It is easy to verify that this definition produces two order relations on Grothendieck topologies and filters respectively.
 \vspace{0.3cm}

In this way, we have the following facts:

\begin{lemma}\label{F-T}
 Every filter on a category  is a Grothendieck topology on the same category.
 \end{lemma}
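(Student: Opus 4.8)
The plan is to verify that the function $\mathfrak{F}$ satisfies the three defining axioms of a Grothendieck topology on $\mathscr{C}$ in the sense of MacLane--Moerdijk \cite{MM}: (GT1) for every object $C$ the maximal sieve $t_C=\{f\mid \mathrm{cod}(f)=C\}$ belongs to $\mathfrak{F}(C)$; (GT2) \emph{stability}, i.e.\ $S\in\mathfrak{F}(C)$ implies $h^{*}(S)\in\mathfrak{F}(D)$ for every $h\colon D\to C$; and (GT3) \emph{transitivity}, i.e.\ if $S\in\mathfrak{F}(C)$ and $R$ is a sieve on $C$ with $h^{*}(R)\in\mathfrak{F}(D)$ for every arrow $h\colon D\to C$ in $S$, then $R\in\mathfrak{F}(C)$. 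The first two axioms I expect to fall out immediately from the filter data. For (GT1): reading $t_C$ as the empty (zero-fold) intersection of sieves, condition $(F_2)$ gives $t_C\in\mathfrak{F}(C)$ directly; equivalently, since every sieve on $C$ is contained in $t_C$, I would pick any member of $\mathfrak{F}(C)$ and apply upward closure $(F_1)$. For (GT2): this is literally condition $(F_3)$, so there is nothing to prove.

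The substance of the lemma is therefore the transitivity axiom (GT3). Here I would first reduce the goal using upward closure: since $R\cap S\subseteq R$, condition $(F_1)$ shows it suffices to establish $R\cap S\in\mathfrak{F}(C)$, and one checks readily that $R\cap S$ coincides with the sieve $\{h\circ g\mid h\in S,\ g\in h^{*}(R)\}$ assembled from the local data. To feed the hypothesis into this, I would exploit the pullback identity $h^{*}(R\cap S)=h^{*}(R)\cap h^{*}(S)$: for $h\in S$ the right ideal property forces $h^{*}(S)=t_{D}$, whence $h^{*}(R\cap S)=h^{*}(R)\in\mathfrak{F}(D)$ for every $h\in S$. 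The aim of this bookkeeping is to recombine the memberships $h^{*}(R)\in\mathfrak{F}(D)$, indexed by the arrows of $S\in\mathfrak{F}(C)$, back into a single membership over $C$, using $(F_2)$ and $(F_3)$ to manage the finite interactions.

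The hard part will be precisely this passage from the local memberships $h^{*}(R)\in\mathfrak{F}(D)$ to the global membership $R\cap S\in\mathfrak{F}(C)$. The filter axioms only guarantee that membership is \emph{preserved} under pullback, since $(F_3)$ runs from $C$ down to $D$, and that membership is closed under \emph{finite} intersection via $(F_2)$; transitivity, by contrast, asks us to \emph{reflect} membership upward along the possibly infinite family of arrows comprising $S$. I therefore expect the crux to be controlling this family --- either by arguing that finitely many arrows of $S$ already suffice, so that $(F_2)$ and $(F_3)$ close the argument, or by locating a single member of $\mathfrak{F}(C)$ contained in $R\cap S$ and finishing with $(F_1)$. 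This is the only step that uses more than a direct translation of the filter axioms, and it is the one I would scrutinise most carefully, since it is exactly the local-character content that separates a mere filter from a genuine Grothendieck topology.
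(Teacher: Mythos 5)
Your verification of (GT1) and (GT2), and your reduction of transitivity (GT3) to showing $R\cap S\in\mathfrak F(C)$, all match the paper's proof, and the bookkeeping is sound: for $h\in S$ one indeed has $h^{*}(S)=t_{D}$, hence $h^{*}(R\cap S)=h^{*}(R)\in\mathfrak F(D)$. But your proposal then stops exactly where a proof would have to begin: you explicitly leave open the passage from the local memberships $h^{*}(R)\in\mathfrak F(D)$, $h\in S$, to the global membership $R\cap S\in\mathfrak F(C)$, offering only two candidate strategies to be scrutinised later. As it stands this is not a proof. For what it is worth, the paper's own proof does not close this gap either: it asserts ``$h^{*}(S\cap R)=h^{*}(S)\cap h^{*}(R)\in\mathfrak F(D)$, consequently $S\cap R\in\mathfrak F(C)$'' (with, incidentally, $h^{*}(S)$ written where $h^{*}(R)$ is meant in the hypothesis), which is precisely the unjustified reflection of membership along pullbacks that you warned about. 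Axioms $(F_1)$--$(F_4)$ let membership travel \emph{down} along $h^{*}$, never back up.

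Moreover, the gap is fatal rather than fillable: the lemma is false as stated, so neither of your proposed repairs can succeed. Let $\mathscr C$ be the chain $c>a_1>a_2>\cdots$ viewed as a category; it is small and finitely complete (terminal object $c$, binary meets, trivial equalizers). The sieves on $c$ are $\emptyset$, the tails $T_n=\{a_n,a_{n+1},\dots\}$ for $n\geqslant 1$, and $t_c=\{c\}\cup T_1$; the sieves on $a_n$ are $\emptyset$ and the tails $T_m$ with $m\geqslant n$, where $t_{a_n}=T_n$. Define $\mathfrak F(c)=\{T_1,\,t_c\}$ and $\mathfrak F(a_n)=\{T_m\mid m\geqslant n\}$. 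Then $(F_1)$--$(F_4)$ hold: every sieve containing a member is again a member, tails intersect to tails, and every pullback of a member along $a_m\leqslant c$ or $a_{m'}\leqslant a_n$ is a nonempty tail lying in the appropriate $\mathfrak F$; the empty sieve never occurs. So $\mathfrak F$ is a filter. Now take $S=T_1\in\mathfrak F(c)$ and $R=T_2$: for every $h\in S$, say $h\colon a_n\leqslant c$, one has $h^{*}(R)=T_{\max(n,2)}\in\mathfrak F(a_n)$, yet $R=T_2\notin\mathfrak F(c)$. Thus (GT3) fails and $\mathfrak F$ is not a Grothendieck topology; in particular no finite subfamily of $S$ suffices, and no member of $\mathfrak F(c)$ lies inside $R\cap S=T_2$, ruling out both of your suggested routes. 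Your instinct to isolate and distrust exactly this local-character step was correct; the honest conclusion is that the statement requires an additional hypothesis on $\mathfrak F$ (some form of local character), not a cleverer argument.
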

 \begin{proof}\
 \begin{itemize}
 \item  Given a filter $\mathfrak F$ and an object $C$ of  $\mathscr{C}$, suposse that $S$ is a sieve on $C$. Since $S\subseteq t_{C}$, we have that $t_{C} \in \mathfrak F(C)$.
 \item If $S\in \mathfrak F(C)$, then certainly \,\,\ $h^{*}(S)\in \mathfrak F(D)$ for any arrow  $h: D\rightarrow C$.
 \item If $S\in \mathfrak F(C)$ and $R$ is any sieve on $C$ such that $h^{*}(S)\in \mathfrak F(D)$ for any arrow  $h: D\rightarrow C$ in $S$, then $ h^{*}(S\cap R)=h^{*}(S)\cap h^{*}(R)\in \mathfrak F(D),$ consequently $S\cap R \in \mathfrak F(C)$, and since $S\cap R\subseteq R$ we have $R\in \mathfrak F(C)$.
 \end{itemize}
  \end{proof}
  \begin{lemma}
  Let $\mathfrak F$ be a filter on a category $\mathscr{C}$ and let $J$ be a Gro\-then\-dieck topology on the same category. If $J\preceq \mathfrak F$ then $J$ is a filter.
  \end{lemma}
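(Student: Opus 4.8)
The plan is to verify that $J$ satisfies the four defining axioms $(F_1)$--$(F_4)$ of a filter, exploiting the Grothendieck axioms (maximality, stability, and transitivity) that $J$ already enjoys, and invoking the hypothesis $J \preceq \mathfrak F$ only where it is strictly needed. The guiding observation is that three of the four filter axioms are automatic for \emph{any} Grothendieck topology, so almost all of the work is bookkeeping.

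First I would dispose of $(F_3)$, which is nothing but the stability (pullback-closure) axiom of a Grothendieck topology, so it holds verbatim. The axiom $(F_1)$ I would derive from transitivity: given $S \in J(C)$ with $S \subseteq R$, note that for each arrow $h \colon D \to C$ belonging to $S$ one has $h^{*}(S) = t_D$, the maximal sieve on $D$, because $S$ is a right ideal; since $S \subseteq R$ gives $t_D = h^{*}(S) \subseteq h^{*}(R)$, we get $h^{*}(R) = t_D \in J(D)$ for every such $h$, and transitivity (taking $S$ as the covering sieve) then forces $R \in J(C)$.

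For $(F_2)$ it suffices to treat binary intersections and then induct, exactly as in the Corollary above. Given $S, T \in J(C)$, I would again apply transitivity along $S$: for each $h \colon D \to C$ in $S$ the identity $h^{*}(S \cap T) = h^{*}(S) \cap h^{*}(T) = t_D \cap h^{*}(T) = h^{*}(T)$ holds, and $h^{*}(T) \in J(D)$ by stability of $T$; transitivity then yields $S \cap T \in J(C)$.

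Finally, $(F_4)$ is the single place where the hypothesis is used: since $\mathfrak F$ is a filter, the empty sieve is absent from $\mathfrak F(C)$, and the inclusion $J(C) \subseteq \mathfrak F(C)$ transports this exclusion to $J(C)$. The point worth emphasising — and the only thing resembling an obstacle — is simply recognising that $(F_1)$, $(F_2)$, $(F_3)$ are consequences of the Grothendieck axioms alone, so that the entire substance of the lemma is carried by $(F_4)$; the condition $J \preceq \mathfrak F$ functions precisely as a device for forbidding the empty covering sieve, which is exactly what distinguishes a filter from a general Grothendieck topology.
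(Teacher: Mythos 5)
Your proof is correct and follows essentially the same route as the paper's: an axiom-by-axiom verification in which $(F_1)$--$(F_3)$ hold for any Grothendieck topology and only $(F_4)$ uses the hypothesis $J \preceq \mathfrak F$ to exclude the empty sieve. The only difference is one of detail: you derive $(F_1)$ and $(F_2)$ from the transitivity, stability, and maximality axioms, whereas the paper simply asserts these standard closure properties of Grothendieck topologies without argument.
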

  \begin{proof}\
  \begin{enumerate}
\item [($F_1$)] Given any object $C$ of $\mathscr{C}$, it is clear that if $S$ is a sieve in $J(C)$ and $R$ is a sieve on $C$ such that $ S \subseteq R$, then  $ R \in J(C)$;
\item [($F_2$)] let  $ (R_i)_{i=1,\cdots,n}$  be a finite collection of sieves on $J(C)$, then $\bigcap_{i=1}^{n} R_i \in J(C)$, and therefore $\bigcap_{i=1}^{n} R_i \in \mathfrak F(C)$ (and consequently is not empty);
\item [($F_3$)] If $S\in J(C)$, then certainly \,\,\ $h^{*}(S)\in J(D)$ for any arrow  $h: D\rightarrow C$.

\item[($F_4$)] the empty sieve is neither in $\mathfrak F(C)$ nor in $J(C)$.
\end{enumerate}
  \end{proof}

\subsection{Product of filters}
Finally in this section we shall be interested in studying a category $\mathscr C$ equipped with a family $( \mathfrak F_{i} )_{i\in I}$ of filters.
\begin{prop}\label{fil-prod}
Let $\mathscr C$ be a category equipped with a family $( \mathfrak F_{i} )_{i\in I}$ of filters, and let $(C_{i})_{i\in I}$ be a family of objects in $\mathscr C$.
Then the function $\mathfrak B$ which assigns to  each object $C=\displaystyle \prod_{i\in I} C_{i}$, the collection of sieves $
\mathfrak B(C)=\Big\{\displaystyle \prod_{i\in I}S_i \mid S_i\in \mathfrak F_{i}(C)\Big\}$, where $S_i=t_{C_i}$ is the maximal sieve on $C_i$  except for a finite number of indices, is basis of a filter on   $\mathscr{C}$.
\end{prop}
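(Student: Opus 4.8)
The plan is to verify the three basis axioms $(B_1)$, $(B_2)$ and $(B_3)$ for the function $\mathfrak{B}$. The one computational fact underlying everything is the description of the product sieve through the projections $\pi_i\colon C\to C_i$ of $C=\prod_{i\in I}C_i$: writing $\prod_{i\in I}S_i$ for the sieve on $C$ consisting of those arrows $g$ with $\mathrm{cod}(g)=C$ and $\pi_i\circ g\in S_i$ for every $i$, one has
$$\prod_{i\in I}S_i \;=\; \bigcap_{i\in I}\pi_i^{*}(S_i),$$
and, since $S_i=t_{C_i}$ for all but finitely many $i$ while $\pi_i^{*}(t_{C_i})=t_{C}$, this intersection is really a finite one, indexed by the non-maximal factors. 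I would record this identity first, as it reduces every axiom to the behaviour of the finitely many non-trivial factors, where the filter axioms of the $\mathfrak F_i$ can be applied.

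For $(B_1)$ I would take two members $\prod_i S_i$ and $\prod_i T_i$ of $\mathfrak{B}(C)$ and check, arrow by arrow on projections, that
$$\Big(\prod_{i\in I}S_i\Big)\cap\Big(\prod_{i\in I}T_i\Big)\;=\;\prod_{i\in I}\,(S_i\cap T_i).$$
Each factor $S_i\cap T_i$ lies in $\mathfrak F_i(C_i)$ by axiom $(F_2)$, and it equals $t_{C_i}$ outside the finite union of the supports of $(S_i)$ and $(T_i)$; hence the right-hand side is again a member of $\mathfrak{B}(C)$, so the intersection not merely contains but actually is a basis element. For the nonemptiness half of $(B_3)$ I would exhibit $\prod_{i\in I}t_{C_i}=t_C$, using that each $t_{C_i}\in\mathfrak F_i(C_i)$ (every filter is a Grothendieck topology by Lemma \ref{F-T}). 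For the other half, that the empty sieve is not in $\mathfrak{B}(C)$, I would choose $f_i\in S_i$ for each of the finitely many non-maximal indices (possible by $(F_4)$) and pull the projection of $C$ onto the corresponding finite subproduct back along the product $\prod_k f_{i_k}$; the resulting arrow into $C$ lies in $\prod_i S_i$, so the product sieve is inhabited. Only finite products and pullbacks are needed here, and these are available in the finitely complete ambient category.

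The main obstacle, and the step I would treat most carefully, is $(B_2)$, for both a genuine and a definitional reason. The computation is clean: for $h\colon D\to C$ one gets
$$h^{*}\Big(\prod_{i\in I}S_i\Big)\;=\;\bigcap_{i\in I}(\pi_i\circ h)^{*}(S_i),$$
and each $(\pi_i\circ h)^{*}(S_i)$ lies in $\mathfrak F_i(D)$ by axiom $(F_3)$. The difficulty is that the target $\mathfrak{B}(D)$ is only defined when $D$ itself carries the structure of a product of objects $D_i$; so the honest reading is to take $D=\prod_{i\in I}D_i$ and $h=\prod_{i\in I}h_i$ a product of arrows $h_i\colon D_i\to C_i$, whence $\pi_i^{C}\circ h=h_i\circ\pi_i^{D}$ yields
$$h^{*}\Big(\prod_{i\in I}S_i\Big)=\prod_{i\in I}h_i^{*}(S_i),$$
with $h_i^{*}(S_i)\in\mathfrak F_i(D_i)$ by $(F_3)$ and $h_i^{*}(t_{C_i})=t_{D_i}$, so finite support is preserved and the pullback is once more a member of $\mathfrak{B}(D)$. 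I would flag explicitly that the scope of $(B_2)$ must be understood relative to product objects and product-compatible arrows, since this is exactly the point at which the construction is most easily mis-stated; everything else is a routine transcription of $(F_2)$, $(F_3)$ and $(F_4)$ to the factor-wise setting.
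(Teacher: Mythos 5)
Your proof is correct and follows the same skeleton as the paper's own proof (the factor-wise intersection identity for $(B_1)$, distribution of pullback over the factors for $(B_2)$), but it is more careful in exactly the two places where the paper's argument does not actually go through, and this is worth spelling out. For $(B_3)$ the paper only says the claim ``is immediate from the first,'' which proves nothing about inhabitation: one must show that $\prod_{i\in I}S_i$ is a nonempty sieve when each $S_i$ is nonempty, and that is precisely your pullback construction --- choose $f_{i_k}\in S_{i_k}$ at the finitely many non-maximal indices, pull the projection onto the finite subproduct back along $\prod_k f_{i_k}$, and invoke the right-ideal property of sieves; note this is also the only step where finite completeness of $\mathscr C$ is genuinely used. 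For $(B_2)$ the paper takes an \emph{arbitrary} arrow $h\colon D\to C$ between products and asserts $h^{*}\bigl(\prod_i S_i\bigr)=\prod_i h^{*}(S_i)$ ``since limits commute with limits''; as your computation $h^{*}\bigl(\prod_i S_i\bigr)=\bigcap_i(\pi_i\circ h)^{*}(S_i)$ shows, for a general $h$ this is an (essentially finite) intersection of sieves lying in the $\mathfrak F_i(D)$ that need not have box form at all: already a coordinate-permuting $h$ yields a box whose factors sit in the wrong filters $\mathfrak F_i$, and a genuinely mixing $h$ yields no box (the topological analogue: the preimage of an open box under an arbitrary map between products is not a box). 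So the paper's version of $(B_2)$ is overclaimed, and your restriction to $D=\prod_i D_i$ and $h=\prod_i h_i$ is the reading under which the statement is true; the only other repair would be to weaken $(B_2)$ to ``$h^{*}$ of a basis element \emph{contains} a basis element,'' which is not the paper's definition. In short: same route, but your proposal supplies the inhabitation argument and the scope restriction without which the paper's own proof is incomplete.
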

\begin{proof}\
\begin{enumerate}
\item[($B_1$)] The formula $\Big( \displaystyle \prod_{i\in I}S_i \Big)\cap \Big(\displaystyle \prod_{i\in I}T_i \Big)= \displaystyle \prod_{i\in I}\left(S_i\cap T_i \right)$ ensure that the intersection of two sieves on $\mathfrak B(C)$ contains a sieve of $\mathfrak B(C)$;
\item[($B_2$)] if $C=\displaystyle \prod_{i\in I} C_{i}$ and $D=\displaystyle \prod_{i\in I} D_{i}$ are objects of $\mathscr C$ and $h: D\rightarrow C$ is any arrow to $C$, since limits commute with limits, we have $h^{*}\Big(\displaystyle \prod_{i\in I}S_i \Big)= \displaystyle \prod_{i\in I}S_i\big(h^{*}(S_i)  \big)$. Therefore  $h^{*}\Big(\displaystyle \prod_{i\in I}S_i \Big)$ is a sieve on $\mathfrak B(D)$.
\item[$(B_3)$] This last assertion is immediate from the first.
\end{enumerate}
\end{proof}
\begin{coro}
The filter of base $\mathfrak B$ \,\ which assigns to  each object $C=\displaystyle \prod_{i\in I} C_{i}$, the collection of sieves $
\mathfrak B(C)=\Big\{\displaystyle \prod_{i\in I}S_i \mid S_i\in \mathfrak F_{i}(C)\Big\}$, where $S_i=t_{C_i}$ is the maximal sieve on $C_i$  except for a finite number of indices, is basis of a filter on   $\mathscr{C}$,  is also generated by the sets ${pr_i}^{-1}(S_i)$, where $S_i$ is a sieve on $C_i$  and $i$ runs through $I$.
\end{coro}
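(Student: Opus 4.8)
The plan is to prove equality of the two filters at the product object $C=\prod_{i\in I}C_i$ by rewriting each generating family in terms of the other. The key is the identity
$$
\prod_{i\in I}S_i \;=\; \bigcap_{i\in I} pr_i^{*}(S_i),
$$
where $pr_i^{*}(S_i)=\{g\mid cod(g)=C,\ pr_i\circ g\in S_i\}$ is the pullback sieve denoted $pr_i^{-1}(S_i)$ in the statement. This identity is just the observation that an arrow $g$ into the product lies in $\prod_{i\in I}S_i$ precisely when each of its components $pr_i\circ g$ lies in the corresponding $S_i$; I would record this as the working description of the product sieve, the one already implicit in the computations of Proposition \ref{fil-prod}.

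Next I would exploit the standing hypothesis that $S_i=t_{C_i}$ for all but finitely many $i$. Since the pullback of the maximal sieve is maximal, $pr_i^{*}(t_{C_i})=t_C$, and such factors may be deleted from the intersection. Hence, writing $F\subseteq I$ for the finite set of indices with $S_i\neq t_{C_i}$, every basic sieve of $\mathfrak B(C)$ is a finite intersection
$$
\prod_{i\in I}S_i=\bigcap_{i\in F} pr_i^{*}(S_i)
$$
of members of the proposed subbase. This yields one inclusion at once: the filter generated by the sieves $pr_i^{*}(S_i)$ contains every element of $\mathfrak B(C)$, and therefore contains the filter $\mathfrak F$ generated by the basis $\mathfrak B$, by minimality of the latter.

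For the reverse inclusion I would observe that a single $pr_i^{*}(S_i)$ is itself a basic box, namely $\prod_{j\in I}S_j$ with $S_j=t_{C_j}$ for every $j\neq i$; thus $pr_i^{*}(S_i)\in\mathfrak B(C)\subseteq\mathfrak F(C)$. Closure of $\mathfrak F$ under finite intersections $(F_2)$ and under passage to larger sieves $(F_1)$ then forces the whole filter generated by $\{pr_i^{*}(S_i)\}$ to sit inside $\mathfrak F$, and the two inclusions give the claimed equality. One should also check that $\{pr_i^{*}(S_i)\}$ is a filter subbase in the sense of the earlier definition, i.e. enjoys the finite intersection property; but this is automatic, since any finite intersection of these sieves is one of the boxes $\prod_{i\in I}S_i\in\mathfrak B(C)$, which by $(F_4)$ is nonempty.

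The main obstacle here is conceptual rather than computational: fixing the precise meaning of the product sieve $\prod_{i\in I}S_i$ and verifying $pr_i^{*}(t_{C_i})=t_C$, so that the a priori infinite intersection $\bigcap_{i\in I}pr_i^{*}(S_i)$ genuinely collapses to the finite intersection indexed by $F$. Once the product sieve is identified with the sieve of arrows whose components lie componentwise in the $S_i$, everything else is a routine application of the filter axioms $(F_1)$ and $(F_2)$.
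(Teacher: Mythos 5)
Your proof is correct and takes essentially the same approach as the paper: the paper's entire proof consists of the single identity $pr_i^{-1}(S_i)=S_i\times \prod_{j\neq i} t_{C_j}$, which is exactly your observation that each pullback $pr_i^{*}(S_i)$ is itself a basic box, giving the inclusion of the subbase-generated filter into $\mathfrak F$. Your complementary identity $\prod_{i\in I}S_i=\bigcap_{i\in F}pr_i^{*}(S_i)$ (using $pr_i^{*}(t_{C_i})=t_C$ to collapse the intersection to the finite set $F$) simply makes explicit the converse inclusion that the paper leaves to the reader, so your write-up is a more complete rendering of the same argument rather than a different one.
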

\begin{proof}
It is a consequence of the fact that $ {pr_i}^{-1}(S_i)=S_i\times \displaystyle \prod_{j\neq i} t_{C_j}.$
\end{proof}
\section{Systems of Neighborhoods}

Recall  that a {\bf point} of  an object $C$ of a category $\mathscr{C}$ is a morphism $p:1\rightarrow C$, where $1$ is a terminal object of $\mathscr{C}$.

\begin{defi}
Let  $(\mathscr C, J)$ be a category equipped with a Grothendieck topology, and let $C$  be an object  of $\mathscr{C}$.
A sieve $V$ in $J(C)$,  is said to be a  $\mathfrak{G}$-neighborhood of a point $p:1\rightarrow C$ if there exist a morphism  $\phi:D\rightarrow C$ in  $V$ and  a point $q:1 \rightarrow D $ such that $\phi\circ q =p$.
\end{defi}

\begin{defi}
Let  $(\mathscr C, J)$ be a category equipped with a Grothendieck topology. A cover-neighborhood of $(\mathscr C, J)$ is a function $\mathcal N$ which assigns to each object $(C, J(C))$ of $(\mathscr{C}, J)$ and to each point $p_{\scriptscriptstyle C}:1\rightarrow C$, a collection
$
\mathcal N_{\scriptstyle p_{_{\scriptscriptstyle C}}}(C)$ \ of sieves of\ $\mathscr C
$
such that each sieve in
 $\mathcal N_{\scriptstyle p_{_{\scriptscriptstyle C}}}(C)$ contains a $\mathfrak{G}$-neighborhood of $p_{\scriptscriptstyle C}$.
\end{defi}

\begin{prop}
Let $\mathscr{C}$ be  a  category, and let $C$ be an object of $\mathscr{C}$. The pair $(C,\mathscr N_p(C))$, where  $\mathscr N_p(C)$ is the collection of all  cover-neighbor\-hoods of a point $p:1\rightarrow C$,  is  {\bf a filtered object}
\end{prop}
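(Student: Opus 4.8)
The plan is to verify the filter conditions $(F_1)$, $(F_2)$ and $(F_4)$ for the collection $\mathscr N_p(C)$ viewed on the complete lattice $Sieve(C)$ — these being the properties that make the neighborhood collection of a point a \emph{proper filter} in the classical sense — and to record $(F_3)$ in its relativised form at the end. The whole argument rests on a single preliminary observation, which I would establish first: \emph{a sieve $V\in J(C)$ is a $\mathfrak G$-neighborhood of $p:1\rightarrow C$ if and only if $p\in V$.} Indeed, if $V$ is a $\mathfrak G$-neighborhood there are $\phi:D\rightarrow C$ in $V$ and a point $q:1\rightarrow D$ with $\phi\circ q=p$; since $V$ is a right ideal and $\mathrm{cod}(q)=D=\mathrm{dom}(\phi)$, the composite $\phi\circ q=p$ already lies in $V$. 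Conversely, if $p\in V$ one takes $\phi=p$, $D=1$ and $q=\mathrm{id}_1$, a point of $1$, so that $\phi\circ q=p$ exhibits $V$ as a $\mathfrak G$-neighborhood. Thus the $\mathfrak G$-neighborhoods of $p$ are exactly the covering sieves containing $p$, and
\[
\mathscr N_p(C)=\{\,S\in Sieve(C)\mid \exists\,V\in J(C),\ p\in V\ \text{and}\ V\subseteq S\,\}.
\]

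With this reformulation the first two axioms become routine. For $(F_1)$, if $S\in\mathscr N_p(C)$ contains a $\mathfrak G$-neighborhood $V$ of $p$ and $R$ is a sieve on $C$ with $S\subseteq R$, then $V\subseteq R$, so $R\in\mathscr N_p(C)$; hence $\mathscr N_p(C)$ is upward closed. For $(F_2)$, given $S_1,\dots,S_n\in\mathscr N_p(C)$ I choose $\mathfrak G$-neighborhoods $V_i\subseteq S_i$ of $p$. Since finite intersections of covering sieves are again covering (a property of Grothendieck topologies already used in Section 3), $\bigcap_{i=1}^n V_i\in J(C)$, and $p\in\bigcap_{i=1}^n V_i$ because $p\in V_i$ for every $i$; hence $\bigcap_{i=1}^n V_i$ is itself a $\mathfrak G$-neighborhood of $p$ contained in $\bigcap_{i=1}^n S_i$, whence $\bigcap_{i=1}^n S_i\in\mathscr N_p(C)$. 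The only place where something could fail — that the intersection of $\mathfrak G$-neighborhoods need not obviously be one — is precisely what the preliminary observation dissolves, and this is the step I expect to be the crux of the argument.

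Finally, $(F_4)$ holds because every $\mathfrak G$-neighborhood of $p$ contains $p$ and is therefore non-empty, so no sieve containing one can be empty and the empty sieve cannot lie in $\mathscr N_p(C)$; non-triviality is equally clear, since the maximal sieve $t_C\in J(C)$ contains $p$ and hence belongs to $\mathscr N_p(C)$. As for $(F_3)$, for a fixed point it should be read relatively: if $h:D\rightarrow C$ and $p$ factors as $p=h\circ q$ through a point $q:1\rightarrow D$, then for $S\in\mathscr N_p(C)$ with $V\subseteq S$ as above one has $h^{*}(V)\in J(D)$ by stability of $J$, while $q\in h^{*}(V)$ because $h\circ q=p\in V$; since $h^{*}(V)\subseteq h^{*}(S)$ this yields $h^{*}(S)\in\mathscr N_q(D)$. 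Thus $(C,\mathscr N_p(C))$ satisfies the filter conditions and is a filtered object, the transport along arrows in $(F_3)$ landing in the cover-neighborhood filter of the induced point.
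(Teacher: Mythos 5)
Your proof is correct, and its skeleton --- upward closure, finite intersections via the $V_i$, exclusion of the empty sieve --- is the same as the paper's. The genuine difference is your preliminary lemma: a covering sieve $V\in J(C)$ is a $\mathfrak{G}$-neighborhood of $p$ if and only if $p\in V$ (forward: $p=\phi\circ q\in V$ because $V$ is a right ideal; backward: take $\phi=p$, $q=\mathrm{id}_1$). The paper states nothing of the sort, and as a consequence its step (ii) is incomplete as written: it exhibits $\bigcap_{i}V_i\subseteq\bigcap_{i}S_i$ and immediately concludes $\bigcap_{i}S_i\in\mathscr N_p(C)$, without ever showing that $\bigcap_{i}V_i$ is (or contains) a $\mathfrak{G}$-neighborhood of $p$; your lemma, combined with the closure of $J(C)$ under finite intersections (which the paper itself records in its example on Grothendieck topologies), is exactly what settles this. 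Similarly, the paper's justification of the last axiom, ``each sieve contains a point $p:1\rightarrow C$'', is false for sieves in general and is only correct in the form you give it: each $\mathfrak{G}$-neighborhood contains $p$, hence is non-empty. Finally, you treat $(F_3)$ in a relativized form ($h^{*}(S)\in\mathscr N_q(D)$ when $p=h\circ q$), which the paper's proof omits entirely; this is a sensible reading, since $\mathscr N_p$ is attached to a single point of a single object and cannot satisfy $(F_3)$ verbatim. In short: same architecture, but your characterization of $\mathfrak{G}$-neighborhoods as covering sieves through $p$ turns the paper's sketch into a complete argument; what the paper's version buys in exchange is only brevity.
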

\begin{proof}\
\begin{enumerate}

\item [(i)] If $S\in\mathscr N_p(C)$ and $R$ is a sieve on $C$ such that $S \subseteq R$, then  $R\in\mathscr N_p(C)$, because there is a $\mathfrak{G}$-neighborhood $V$ of $p_{\scriptscriptstyle C}$ such that $V \subseteq S\subseteq R$;
\item[(ii)] let $\{S_1, S_2,\cdots,S_n\}$ be a finite col\-lec\-tion of sie\-ves of $\mathscr N_p(C)$, then there exists a col\-lec\-tion    $\{V_1, V_2,\cdots, V_n\}$  of $\mathfrak{G}$-neigh\-bor\-hood of $p_{\scriptscriptstyle C}$ such that $V_i\subseteq S_i$ for $I=1,2,\cdots n$, therefore
$\bigcap_{i=1}^n V_n \subseteq\bigcap_{i=1}^n S_n \,\ \text{and}\,\ \bigcap_{i=1}^n S_n\in\mathscr N_p(C)$;
\item[(iii)] the empty sieve is not in $\mathscr N_p(C)$ (each  sieve contains a point $p: 1\rightarrow C$).
\end{enumerate}
\end{proof}
In this case, we say that the point $p: 1\rightarrow C$ is a {\bf limit point} of $\mathscr N_p(C)$. 

\begin{defi}\label{converges}
Let  $(\mathscr C, J)$ be a  category equipped with a Grothendieck topology; let  $\mathfrak F$ be a filter on  $\mathscr{C}$ and let $C$  be an object  of $\mathscr{C}$. 
\begin{enumerate}
\item We shall say that $\mathfrak F(C) $ {\bf converges} to a point $p:1\rightarrow C$ if\linebreak $\mathscr N_p(C)\subseteq \mathfrak F(C) $.
\item The closure of a sieve $A$ on $C$ is the collection of all points $p: 1\rightarrow C$  such that every cover-neighborhood of $p$ meets $A$.
\item A point $p:1\rightarrow C$ is a cluster point of $\mathfrak B(C)$ -the image under the filter base $\mathfrak B$ of $C$- if it lies in the closure of all the sieves on $\mathfrak B(C)$.
\item A point $p:1\rightarrow C$ is a cluster point of $\mathfrak F(C)$ -the image under the filter  $\mathfrak F$ of $C$- if it lies in the closure  of all the sieves on $\mathfrak F(C)$.
\item When  $\mathfrak F(C) $  converges to a point $p:1\rightarrow C$, we shall say that $p: 1\rightarrow C$ is a {\bf limit} point of $\mathfrak F(C)$.
\end{enumerate}
\end{defi}

\begin{defi}
Let  $(\mathscr C, J)$ be a  category equipped with a Grothendieck topology; let  $\mathfrak B$ a basis of a filter on $\mathscr{C}$ and let $C$  be an object  of $\mathscr{C}$. 
The point $p: 1\rightarrow C$ is said to be a limit of $\mathfrak B(C)$ if the image of $C$, by the filter whose base is  $\mathfrak B$, converges to $p: 1\rightarrow C$.
\end{defi}
\begin{prop}
Let  $(\mathscr C, J)$ be a  category equipped with a Grothen\-dieck topology; let  $\mathfrak F$ be a filter on  $\mathscr{C}$ and let $C$  be an object  of $\mathscr{C}$. The point $p: 1\rightarrow C$ is a cluster point of  $\mathfrak F(C)$ if and only if there exists a filter  $\mathscr G$ finer than  $\mathfrak F$ such that $\mathscr G(C) $ {\bf converges} to $p:1\rightarrow C$.
\end{prop}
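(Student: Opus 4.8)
The plan is to transport the classical equivalence ``$p$ is a cluster point of $\mathfrak F$ iff some finer filter converges to $p$'' to the present setting, keeping in mind that both notions are evaluated at the single object $C$: the statement ``$p$ is a cluster point of $\mathfrak F(C)$'' unwinds to ``every $A\in\mathfrak F(C)$ has $p$ in its closure,'' i.e. every cover-neighborhood of $p$ meets every member of $\mathfrak F(C)$, while ``$\mathscr G(C)$ converges to $p$'' means precisely $\mathscr N_p(C)\subseteq\mathscr G(C)$.

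For the implication ($\Leftarrow$) I would assume $\mathfrak F\preceq\mathscr G$ with $\mathscr N_p(C)\subseteq\mathscr G(C)$, fix an arbitrary $A\in\mathfrak F(C)$ and an arbitrary cover-neighborhood $N\in\mathscr N_p(C)$, and observe that both lie in $\mathscr G(C)$. By ($F_2$) and ($F_4$) applied to the filter $\mathscr G$, the sieve $A\cap N$ is a nonempty element of $\mathscr G(C)$, so $N$ meets $A$. As $N$ and $A$ range over all choices, $p$ lies in the closure of every member of $\mathfrak F(C)$, which is exactly the cluster-point condition.

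For the substantial implication ($\Rightarrow$) the idea is to enlarge $\mathfrak F$ by the neighborhood system of $p$ only at the object $C$ and then invoke the subbase-completion proposition of Section~3. Concretely I would set $\mathfrak S(C)=\mathfrak F(C)\cup\mathscr N_p(C)$ and $\mathfrak S(D)=\mathfrak F(D)$ for every object $D\neq C$, and check that $\mathfrak S$ is a filter subbase, i.e. that no finite subcollection of any $\mathfrak S(D)$ has empty intersection. For $D\neq C$ this is inherited from the filter $\mathfrak F$. For $D=C$ a finite subcollection splits into sieves $A_1,\dots,A_k\in\mathfrak F(C)$ and cover-neighborhoods $N_1,\dots,N_m\in\mathscr N_p(C)$; writing $A=\bigcap_{i}A_i\in\mathfrak F(C)$ (by ($F_2$)) and $N=\bigcap_{j}N_j\in\mathscr N_p(C)$ (by the finite-intersection clause of the filtered object $(C,\mathscr N_p(C))$), the hypothesis that $p$ is a cluster point gives $p$ in the closure of $A$, and since $N$ is a cover-neighborhood of $p$ we conclude $A\cap N\neq\emptyset$; the degenerate cases $k=0$ and $m=0$ are covered by clause (iii) for $\mathscr N_p(C)$ and by ($F_4$) for $\mathfrak F$ respectively.

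Once $\mathfrak S$ is known to be a filter subbase, the earlier proposition produces a filter $\mathscr G$ with $\mathfrak S\preceq\mathscr G$. Then $\mathfrak F(D)\subseteq\mathfrak S(D)\subseteq\mathscr G(D)$ for all $D$ gives $\mathfrak F\preceq\mathscr G$, and $\mathscr N_p(C)\subseteq\mathfrak S(C)\subseteq\mathscr G(C)$ gives $\mathscr N_p(C)\subseteq\mathscr G(C)$, i.e. $\mathscr G(C)$ converges to $p$, completing the proof. I expect the work to be bookkeeping rather than conceptual: the single load-bearing computation is the mixed intersection $A\cap N\neq\emptyset$, which is exactly where the cluster-point hypothesis is consumed, whereas stability under pullback (($F_3$)) across the various objects is not verified by hand but is delegated wholesale to the subbase-completion proposition.
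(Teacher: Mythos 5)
Your proof is correct (relative to the paper's Section~3 machinery) and its two halves line up with the paper's in substance, but the forward direction takes a genuinely different formal route. The paper proves ($\Rightarrow$) by exhibiting an explicit filter \emph{base} at $C$, namely $\mathscr B(C)=\{A\cap V \mid A\in\mathfrak F(C),\ V \text{ a } \mathfrak G\text{-neighborhood of } p\}$, verifying the base axioms by hand (the verification there contains slips: ``$A\cup W$'' should read $A\cap W$, and the displayed identity $(A\cap V)\cap(A\cup W)=A\cup(V\cap W)$ should be $(A\cap V)\cap(A'\cap W)=(A\cap A')\cap(V\cap W)$), and then passing to the generated filter. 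You instead take the union $\mathfrak S(C)=\mathfrak F(C)\cup\mathscr N_p(C)$, $\mathfrak S(D)=\mathfrak F(D)$ elsewhere, check the finite intersection property --- where the mixed case $A\cap N\neq\emptyset$ consumes the cluster-point hypothesis exactly as the paper's nonemptiness check does --- and invoke the subbase-completion proposition. What your route buys: you say explicitly what the enlarged structure is at objects $D\neq C$ (the paper's $\mathscr B$ is only ever defined at $C$), and you avoid the error-prone base verification. What it costs: everything concerning pullback stability, axiom ($F_3$), is delegated to the Section~3 subbase proposition, which the paper asserts without proof and which is in fact the delicate point in this categorical setting, since $h^{*}$ of a nonempty sieve can be empty; note, however, that the paper's own proof silently skips the corresponding base axiom ($B_2$), so your proof is no less rigorous than the original, and it has the merit of naming the debt rather than hiding it.
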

\begin{proof}
Let us begin by assuming that the point $p:1\rightarrow C$ is a cluster point of  $\mathfrak F(C)$; from definition \ref{converges}, it follows that for each sieve $A$ in $\mathfrak F(C)$, every $\mathfrak{G}$-neighborhood $V$ of $p$ meets $A$. We need to show that the collection  $ \mathscr B(C) =\{A\cap V\mid V \,\ \text{is a $\mathfrak{G}$-neighborhood of}\,\ p\}$ define  a base for a filter  $\mathscr G$ finer than $\mathfrak F$.in such a way that  $\mathscr G(C) $ {\bf converges} to $p: 1\rightarrow C$.

Indeed,
\begin{enumerate}
\item[($B_1)$] Let $A\cap V$,\,\ $A\cup W$ two elements of the collection $\mathscr B(C)$, since $(A\cap V) \cap (A\cup W)= A\cup (V\cap W) $ and $V\cap W$ is a $\mathfrak{G}$-neighborhood of $p$, there exists $U$,  a $\mathfrak{G}$-neighborhood of $p$ such that $U\subseteq V\cap W,$ and clearly $A\cap U \in \mathscr B(C)$;
\item[$(B_2)$] Obviously $\mathscr B(C)$ is not empty, and the empty sieve is not in $\mathscr B(C)$.
\end{enumerate}
Now, if $\mathscr G$ is the filter generated by $\mathscr B$ then  $\mathscr G$ is finer than  $\mathfrak F$, and $\mathscr G(C)$ naturally converges to $p: 1\rightarrow C$.

Conversely, if there is a filter  $\mathscr G$ finer than  $\mathfrak F$ such that $\mathscr G(C) $ {\bf converges} to $p: 1\rightarrow C$ then each sieve $R$ in $\mathfrak F(C)$ and each $\mathfrak{G}$-neighbor\-hood $U$ of $p: 1\rightarrow C$ belongs to  $\mathscr G$ and hence meet, so the point $p: 1\rightarrow C$ is a cluster point of  $\mathfrak F(C)$.
\end{proof}
\begin{prop}
Let  $(\mathscr C, J)$ be a  category equipped with a Grothen\-dieck topology; let  $C$  be an object  of $\mathscr{C}$ and let $A$ be a sieve on  $C$. The point $p: 1\rightarrow C$ lies in the closure  of $A$ if and only if there is a filter  $\mathscr G $ such that  $A\in\mathscr G(C) $ and $\mathscr G(C) $ {\bf converges} to $p: 1\rightarrow C$.
\end{prop}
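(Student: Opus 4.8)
The plan is to establish the two implications separately. Throughout I would use the equivalence between the condition ``$p$ lies in the closure of $A$'' and the condition ``every $\mathfrak{G}$-neighborhood of $p$ meets $A$'': indeed every $\mathfrak{G}$-neighborhood is in particular a cover-neighborhood, while conversely any cover-neighborhood contains a $\mathfrak{G}$-neighborhood, so that if the contained $\mathfrak{G}$-neighborhood meets $A$ then so does the larger cover-neighborhood. This reformulation is what links the closure condition to the filter axioms.

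For the implication from right to left, suppose a filter $\mathscr G$ satisfies $A\in\mathscr G(C)$ and $\mathscr N_p(C)\subseteq\mathscr G(C)$. Given an arbitrary cover-neighborhood $N$ of $p$, we have $N\in\mathscr N_p(C)\subseteq\mathscr G(C)$, so by $(F_2)$ the sieve $A\cap N$ belongs to $\mathscr G(C)$, and by $(F_4)$ it is nonempty; hence $N$ meets $A$. As $N$ was arbitrary, $p$ lies in the closure of $A$. This direction is a direct application of the filter axioms and presents no difficulty.

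For the converse I would reproduce the filter-base construction of the preceding proposition. Assuming $p$ lies in the closure of $A$, every $\mathfrak{G}$-neighborhood $V$ of $p$ meets $A$, so each $A\cap V$ is a nonempty sieve, and I set $\mathscr B(C)=\{A\cap V\mid V\text{ is a }\mathfrak{G}\text{-neighborhood of }p\}$. To see this is a filter base, for $(B_1)$ I use $(A\cap V)\cap(A\cap W)=A\cap(V\cap W)$ together with the fact that $V\cap W$ is again a $\mathfrak{G}$-neighborhood of $p$ (covering sieves are closed under finite intersection and $p$ still factors through an arrow of $V\cap W$); for $(B_3)$ the collection is nonempty because the maximal sieve $t_C$ is a $\mathfrak{G}$-neighborhood of $p$, and the empty sieve is excluded precisely because each $A\cap V$ is nonempty. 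Let $\mathscr G$ denote the filter generated by $\mathscr B$.

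It then remains to check the two required properties. Since $A\cap V\subseteq A$ with $A\cap V\in\mathscr B(C)$, the generation rule yields $A\in\mathscr G(C)$. For convergence, any cover-neighborhood $N\in\mathscr N_p(C)$ contains a $\mathfrak{G}$-neighborhood $V$, so $A\cap V\subseteq V\subseteq N$ with $A\cap V\in\mathscr B(C)$, whence $N\in\mathscr G(C)$; thus $\mathscr N_p(C)\subseteq\mathscr G(C)$ and $\mathscr G(C)$ converges to $p$. The main obstacle I anticipate is not in these closure manipulations but in making the filter base genuinely global: $\mathscr B$ is specified only at $C$, so one must verify the pullback-stability condition $(B_2)$ under arrows $h\colon D\to C$, checking that $h^{*}(A\cap V)$ is a well-behaved sieve over $D$. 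I expect to dispatch this exactly as in the product-of-filters argument, since $h^{*}$ commutes with intersection.
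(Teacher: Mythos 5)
Your proof is correct and, on the substantive direction, it is the same proof as the paper's: from the hypothesis that $p$ lies in the closure of $A$ you form the base $\mathscr B(C)=\{A\cap V\mid V \text{ a $\mathfrak{G}$-neighborhood of } p\}$ and take $\mathscr G$ to be the filter it generates, exactly as the paper does (the paper even leaves the two verifications $A\in\mathscr G(C)$ and $\mathscr N_p(C)\subseteq\mathscr G(C)$ implicit, which you spell out). The only genuine divergence is in the converse: the paper routes it through the preceding proposition, arguing that a convergent filter has its limit as a cluster point, hence $p$ lies in the closure of every member of $\mathscr G(C)$, in particular of $A$; you instead verify it directly from the axioms, intersecting $A$ with an arbitrary cover-neighborhood $N$ and using $(F_2)$ and $(F_4)$ to see that $A\cap N$ is a nonempty member of $\mathscr G(C)$. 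The two computations are the same at bottom; yours is self-contained, the paper's reuses its machinery. Two remarks. First, your parenthetical reason why $V\cap W$ is again a $\mathfrak{G}$-neighborhood of $p$ (``$p$ still factors through an arrow of $V\cap W$'') deserves one more line: if $\phi\in V$ and $\phi\circ q=p$, then $p=\phi\circ q\in V$ because sieves absorb precomposition, so $p\in V\cap W$ and one may take the factoring arrow to be $p$ itself with $q=\mathrm{id}_1$. Second, the obstacle you flag at the end --- that $\mathscr B$ is specified only at $C$ while the paper's notion of base is global and requires the pullback condition $(B_2)$ --- is real, but your proposed remedy misses the actual difficulty: $h^{*}$ does commute with intersections, yet $h^{*}(A\cap V)$ may be empty, so what is endangered at other objects is the nonemptiness condition $(B_3)$ (equivalently $(F_4)$ for the generated filter), not $(B_1)$. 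The paper's own proof, and its proof of the preceding proposition, silently ignore this globalization issue altogether, so your argument meets the paper's standard of rigor and is more candid about where the remaining gap lies.
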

\begin{proof}
Let us begin by assuming that the point $p:1\rightarrow C$ lies in the closure  of $A$; from definition \ref{converges}, it follows that  every $\mathfrak{G}$-neighborhood $V$ of $p$ meets $A$. Then
$ \mathscr B(C) =\{ A\cap V\mid V \,\ \text{is a $\mathfrak{G}$-neighborhood of $p$} \}
$ 
is a base  for a filter  $\mathscr G$, in such a way that  $\mathscr G(C) $ {\bf converges} to $p:1\rightarrow C$.

Conversely, if  $A \in \mathscr G(C)$ and  $\mathscr G(C) $ {\bf converges} to $p:1\rightarrow C$ then  $p:1\rightarrow C$ is a cluster point  of $\mathscr G(C) $ and hence $p:1\rightarrow C$ lies in the closure  of $A$.
\end{proof}
\begin{coro}
Let $\mathfrak{U}$ be an ultrafilter on a category $\mathscr{C}$, and let $C$ be an object of $\mathscr{C}$.  $\mathfrak U(C)$ converges to a point $p:1\rightarrow C$ if and only if $p:1\rightarrow C$ is a cluster point of  $\mathfrak U(C)$.
\end{coro}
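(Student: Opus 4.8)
The plan is to derive this corollary directly from the preceding proposition characterizing cluster points, namely that $p:1\rightarrow C$ is a cluster point of $\mathfrak F(C)$ if and only if there exists a filter $\mathscr G$ finer than $\mathfrak F$ with $\mathscr G(C)$ converging to $p$. I would apply this characterization with $\mathfrak F=\mathfrak U$ and then exploit the maximality built into the definition of an ultrafilter.

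For the forward implication (convergence implies clustering), I observe that it holds for an arbitrary filter and makes no use of maximality. If $\mathfrak U(C)$ converges to $p$, then $\mathfrak U$ is itself a filter finer than $\mathfrak U$, in the non-strict sense since $\mathfrak U(C)\subseteq\mathfrak U(C)$ trivially, whose image at $C$ converges to $p$; the preceding proposition then immediately certifies that $p$ is a cluster point of $\mathfrak U(C)$.

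For the reverse implication (clustering implies convergence), suppose $p$ is a cluster point of $\mathfrak U(C)$. The preceding proposition furnishes a filter $\mathscr G$ finer than $\mathfrak U$ such that $\mathscr G(C)$ converges to $p$. Here ``finer'' means $\mathfrak U(C')\subseteq\mathscr G(C')$ for every object $C'$ of $\mathscr C$, so $\mathscr G$ is either equal to $\mathfrak U$ or strictly finer than it. Since $\mathfrak U$ is an ultrafilter, no filter on $\mathscr C$ is strictly finer than $\mathfrak U$, which forces $\mathscr G=\mathfrak U$. Consequently $\mathfrak U(C)=\mathscr G(C)$ converges to $p$, as required.

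The only delicate point, and the step I would be most careful about, is the collapse $\mathscr G=\mathfrak U$: it rests on reading ``finer'' in the non-strict sense supplied by the order on filters, so that the maximality of the ultrafilter excludes the strict case and leaves only equality. Everything else is a direct invocation of the cluster-point characterization, so no further constructions or estimates are needed.
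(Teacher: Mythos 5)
Your proof is correct and follows exactly the route the paper intends: the corollary is stated without proof as an immediate consequence of the preceding proposition (cluster point of $\mathfrak F(C)$ iff some finer filter $\mathscr G$ has $\mathscr G(C)$ converging to $p$), and your argument supplies precisely that derivation, using $\mathscr G=\mathfrak U$ for the forward direction and the maximality of the ultrafilter to force $\mathscr G=\mathfrak U$ in the reverse direction. Your care in reading ``finer'' non-strictly, so that strict fineness is what the ultrafilter property excludes, is exactly the right point to attend to.
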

\begin{ex}
Let $\mathcal A$ be a complete Heyting algebra and regard  $\mathcal A$  as a category in the usual way. 
\begin{itemize}
\item  Then (see \cite{MM}) $\mathcal A$ can be equipped with a base for a Grothendieck topology $K$, given by
$\{a_i\mid i\in I\}\in K(c)$ if and only if $\bigvee_{i\in I}=c$,
where $\{a_i\mid i\in I\}\subseteq \mathcal A$ and $c\in \mathcal A$.

\item A sieve $S$ on an element $c$ of  $\mathcal A$ is just a subset of elements $b\leqslant c$ such that $a\leqslant b\in S$ implies $a\in S$.
\item In the Grothendieck topology $J$ with basis $K$, a sieve $S$ on $c$ covers $c$  iff
$
\bigvee S=c.
$
\item A filter on $\mathcal A$ is a function $\mathfrak F$ which assigns to each element $c$ of $\mathcal A$ a  collection $\mathfrak F(c) $ of sieves, such that 
\begin{enumerate}
\item [($F_1$)] If $S \in \mathfrak F(c)$ and $ R$ is a sieve on $c$ such that $ S \subseteq R$, then  $ R \in \mathfrak F(c)$;
\item [($F_2$)] every finite intersection of sieves of $\mathfrak F(c)$ belongs to $\mathfrak F(c)$;
\item [($F_3$)] the empty sieve is not in $\mathfrak F(c)$.
\end{enumerate}
\item An immediate consequence of the previous construction of  a Grothendieck topology and a filter on $\mathcal A$ is that 
$ \mathfrak F(c)$ converges to $c$ iff $\bigvee S=c$, for each $ S \in \mathfrak F(c).
$
\end{itemize}
\end{ex}

\section{Filter-preserving functors}
\begin{defi}
Let $(\mathscr C, \mathfrak F)$ and $(\mathscr D, \mathfrak G)$  be  small categories  equipped with filters and $F:\mathscr C\longrightarrow \mathscr D$ a functor. We say $F$ is filter-preserving (or continuous)  if, for any $c\in ob(\mathscr C)$ and any covering sieve \linebreak $R\in \mathfrak{F}(c)$, the family $\{ F(f)\mid f\in R\}$ generates a covering sieve $S \in \mathfrak{G}\left(F(c)\right)$, consisting of all the morphisms with codomain $F(c)$ which factor through at least one $F(f)$.
\end{defi}
We shall use the notation $\langle F(R)\rangle$     to denote the covering sieve generated by the family $\{ F(f)\mid f\in R\}$ in $ \mathfrak{G}\left(F(c)\right)$.
\begin{prop}\label{ne-im}
Let $F: (\mathscr C, J)\longrightarrow (\mathscr D, K)$ be a morphism of sites. If, for every  object $C$ of $\mathscr C$,\,\  $V$ is a $\mathfrak G$-neighborhood of a point $p: 1\rightarrow C$, then the family $F(V)=\{ F(\alpha)\mid \alpha \in V\}$ generates  a $\mathfrak G$-neighborhood $\langle F(V)\rangle$ of the point $F(p)$  of $F(C)$ in $\mathscr D$.
\end{prop}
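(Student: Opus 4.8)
The plan is to verify the two defining requirements of a $\mathfrak G$-neighborhood for $\langle F(V)\rangle$ at the point $F(p)$: first, that $\langle F(V)\rangle$ is a covering sieve of $F(C)$, i.e.\ $\langle F(V)\rangle\in K(F(C))$; and second, that there is a morphism in $\langle F(V)\rangle$ through which $F(p)$ factors via a point of its domain. The first requirement is exactly what the hypothesis that $F$ is a morphism of sites delivers: since $V$ is a $\mathfrak G$-neighborhood of $p$, in particular $V\in J(C)$, and by the filter-preserving (continuity) property the family $\{F(\alpha)\mid \alpha\in V\}$ generates a covering sieve $\langle F(V)\rangle\in K(F(C))$. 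So no real work is needed there beyond quoting the definition of filter-preserving functor.

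For the second requirement I would unwind the definition of $\mathfrak G$-neighborhood applied to $V$: there exist an arrow $\phi:D\rightarrow C$ with $\phi\in V$ and a point $q:1\rightarrow D$ such that $\phi\circ q=p$. Applying $F$ and using functoriality gives $F(\phi):F(D)\rightarrow F(C)$ together with $F(q):F(1)\rightarrow F(D)$, and $F(\phi)\circ F(q)=F(\phi\circ q)=F(p)$. Since $\phi\in V$, the arrow $F(\phi)$ belongs to the generating family $\{F(\alpha)\mid\alpha\in V\}$ and hence lies in $\langle F(V)\rangle$. Thus $F(\phi)$ is the required arrow of $\langle F(V)\rangle$ through which $F(p)$ factors, and $F(q)$ is the witnessing point of its domain $F(D)$.

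The one genuinely delicate point --- and the step I expect to be the main obstacle --- is that $F(q)$ and $F(p)$ must qualify as \emph{points}, i.e.\ as arrows out of a terminal object of $\mathscr D$. This forces $F(1)$ to be (canonically isomorphic to) the terminal object $1$ of $\mathscr D$, so that $F(q):1\rightarrow F(D)$ and $F(p):1\rightarrow F(C)$ are genuine points. I would therefore make explicit the standing assumption that a morphism of sites preserves the terminal object (equivalently, preserves finite limits, the usual requirement for a continuous functor), and identify $F(1)$ with $1$ along the canonical isomorphism; the factorisation $F(\phi)\circ F(q)=F(p)$ then reads as an identity of points of $F(C)$. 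Once this identification is in place, the two requirements above combine to show that $\langle F(V)\rangle$ is a $\mathfrak G$-neighborhood of $F(p)$, completing the argument.
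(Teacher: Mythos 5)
Your proof is correct and takes essentially the same route as the paper's: unwind the definition of a $\mathfrak G$-neighborhood to obtain $\phi\in V$ and a point $q:1\rightarrow D$ with $\phi\circ q=p$, apply $F$ functorially to get $F(\phi)\circ F(q)=F(p)$, and observe that $F(\phi)$ lies in $\langle F(V)\rangle$. You are in fact more careful than the paper, whose two-line proof neither checks that $\langle F(V)\rangle$ is a covering sieve in $K(F(C))$ nor addresses the point you rightly flag as delicate --- that $F$ must carry the terminal object to (something canonically isomorphic to) the terminal object of $\mathscr D$ for $F(p)$ and $F(q)$ to qualify as points.
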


\begin{proof}
The hypothesis that $V$ is a $\mathfrak G$-neighborhood of a point $p: 1\rightarrow C$  tell us that there exists a morphism  $\phi:D\rightarrow C$ in  $V$ and  a point $q:1 \rightarrow D $ such that $\phi\circ q=p$.

Next, we apply functor $F$ to obtain $F(\phi)\circ F(q)= F(p)$, 
where , of course, $F(\phi)$ is in                                                                                                                                                          the $\mathfrak G$-neighborhood $\langle F(V)\rangle$ of the point $F(p)$. 
\end{proof}
 \begin{prop}\label{cov-ne-im}
 Let $F: (\mathscr C, J)\longrightarrow (\mathscr D, K)$ be a morphism of sites and let  $\mathscr N$ be a cover-neighborhood of $(\mathscr C, J)$ then $\langle  F(\mathscr N)  \rangle$ is a a cover-neighborhood of $(\mathscr D, K)$.
\end{prop}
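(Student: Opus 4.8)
The plan is to unwind the notation $\langle F(\mathscr N)\rangle$ and then verify the single defining clause of a cover-neighborhood, leaning almost entirely on Proposition \ref{ne-im}. I read $\langle F(\mathscr N)\rangle$ as the assignment which, to each object $F(C)$ of $\mathscr D$ and each point $F(p):1\rightarrow F(C)$ arising from a point $p:1\rightarrow C$ of $\mathscr C$, attaches the collection $\langle F(\mathscr N)\rangle_{F(p)}(F(C))=\{\langle F(S)\rangle \mid S\in \mathscr N_{p}(C)\}$, where $\langle F(S)\rangle$ is the sieve on $F(C)$ generated by the family $\{F(f)\mid f\in S\}$, exactly as in the notation fixed for filter-preserving functors above. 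With this reading, the only thing to establish is that every sieve of the form $\langle F(S)\rangle$ with $S\in\mathscr N_{p}(C)$ contains a $\mathfrak G$-neighborhood of $F(p)$.

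First I would fix an object $C$, a point $p:1\rightarrow C$, and a sieve $S\in\mathscr N_{p}(C)$. By the definition of a cover-neighborhood applied to $\mathscr N$, there is a $\mathfrak G$-neighborhood $V$ of $p$ with $V\subseteq S$. Applying Proposition \ref{ne-im} to $V$ then produces the sieve $\langle F(V)\rangle$, which is a $\mathfrak G$-neighborhood of $F(p)$ in $(\mathscr D,K)$; in particular $\langle F(V)\rangle\in K(F(C))$.

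The second step is the monotonicity of the generating operation. Since $V\subseteq S$, every arrow $f\in V$ is also an arrow of $S$, so each generator $F(f)$ of $\langle F(V)\rangle$ is already a generator of $\langle F(S)\rangle$; as $\langle F(S)\rangle$ is a sieve, it then contains every morphism factoring through some such $F(f)$, whence $\langle F(V)\rangle\subseteq\langle F(S)\rangle$. Combining the two steps, $\langle F(S)\rangle$ contains the $\mathfrak G$-neighborhood $\langle F(V)\rangle$ of $F(p)$, which is precisely the defining property of a cover-neighborhood of $(\mathscr D,K)$. As a by-product this also confirms that $\langle F(S)\rangle$ is a covering sieve, since it contains the covering sieve $\langle F(V)\rangle$ and $K$ is closed under passing to larger sieves.

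I do not expect a serious obstacle in the mathematics itself, which is essentially a containment argument layered on top of Proposition \ref{ne-im}. The one point demanding care is the bookkeeping of the notation $\langle F(\mathscr N)\rangle$: one must pin down on which objects and points of $\mathscr D$ it is defined, namely those in the image of $F$, with $F(p)$ a genuine point of $F(C)$, which rests on $F$ preserving the terminal object exactly as already used in Proposition \ref{ne-im}. Once this indexing is made explicit, the verification of the single clause is immediate.
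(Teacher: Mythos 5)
Your proof is correct and follows essentially the same route as the paper's: pick a sieve in the cover-neighborhood, extract the contained $\mathfrak{G}$-neighborhood $V$, push it forward with $F$, and use preservation of inclusions together with Proposition \ref{ne-im} to see that $\langle F(V)\rangle$ is a $\mathfrak{G}$-neighborhood of $F(p)$ sitting inside $\langle F(S)\rangle$. If anything, your write-up is more careful than the paper's, since you work throughout with the generated sieves $\langle F(V)\rangle\subseteq\langle F(S)\rangle$ (rather than the bare families $F(V)\subseteq F(W)$) and you make the indexing of $\langle F(\mathscr N)\rangle$ over objects and points in the image of $F$ explicit.
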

 \begin{proof}
  Let $V$ in $J(C)$,  be  a  $\mathfrak{G}$-neighborhood  of a point $p_{\scriptscriptstyle C}: \rightarrow C$, and let $W$ a sieve in $ \mathscr N_{\scriptstyle p_{_{\scriptscriptstyle C}}}(C)$, where  $\mathscr N$ is a cover-neighborhood of $(\mathscr C, J)$, such that $V \hookrightarrow W$ is an inclusion.
Since $F$ is a morphisms of sites, $F(V) \hookrightarrow F(W)$ is also an inclusion which belongs to $\langle  \mathscr N_{\scriptstyle p_{_{\scriptscriptstyle C}}}(C)  \rangle$, and clearly $\langle  F(\mathscr N)  \rangle$ is a a cover-neighborhood of $(\mathscr D, K)$.
  
 \end{proof}
\begin{prop}\label{fil-im}
 Let $F: (\mathscr C, J)\longrightarrow (\mathscr D, K)$ be a morphism of sites and let  $\mathfrak B$ be a  basis of a filter on the category $\mathscr{C}$ then $ F(\mathfrak B (C))$ is a basis of  filter on the category $\mathscr{D}$.
\end{prop}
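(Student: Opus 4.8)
The plan is to verify directly that the assignment $F(C)\mapsto F(\mathfrak B(C))$, where $F(\mathfrak B(C))=\{\langle F(R)\rangle\mid R\in\mathfrak B(C)\}$ and $\langle F(R)\rangle$ denotes the sieve on $F(C)$ generated by $\{F(f)\mid f\in R\}$, satisfies the three conditions $(B_1)$--$(B_3)$ defining a basis of a filter, working with the image objects $F(C)$ and the arrows induced from $\mathscr C$ as in Propositions \ref{ne-im} and \ref{cov-ne-im}. The one structural fact I would isolate at the outset is that $R\mapsto\langle F(R)\rangle$ is monotone: if $R\subseteq S$, then every generator $F(f)$ with $f\in R$ is already a generator coming from $S$, so $\langle F(R)\rangle\subseteq\langle F(S)\rangle$. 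The two easy axioms reduce to this observation.

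For $(B_1)$, I would take two members $\langle F(R)\rangle,\langle F(S)\rangle$ of $F(\mathfrak B(C))$, invoke $(B_1)$ for the basis $\mathfrak B$ to obtain $T\in\mathfrak B(C)$ with $T\subseteq R\cap S$, and conclude by monotonicity that $\langle F(T)\rangle\subseteq\langle F(R)\rangle\cap\langle F(S)\rangle$ with $\langle F(T)\rangle\in F(\mathfrak B(C))$. For $(B_3)$, the collection $F(\mathfrak B(C))$ is nonempty because $\mathfrak B(C)$ is; and since $(B_3)$ for $\mathfrak B$ rules out the empty sieve, each $R\in\mathfrak B(C)$ contains some arrow $f$, so $F(f)\in\langle F(R)\rangle$ and no member of $F(\mathfrak B(C))$ is the empty sieve. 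Both of these steps are routine bookkeeping.

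The crux is $(B_2)$, stability under pullback, and this is where the hypothesis that $F$ is a morphism of sites does the real work. Given $R\in\mathfrak B(C)$ and an arrow $h:D\to C$ of $\mathscr C$, axiom $(B_2)$ for $\mathfrak B$ gives $h^{*}(R)\in\mathfrak B(D)$, so $\langle F(h^{*}(R))\rangle\in F(\mathfrak B(D))$, and it remains to identify this with the pullback $F(h)^{*}\langle F(R)\rangle$. One inclusion is purely formal: if $g=F(f)\circ k$ with $f\in h^{*}(R)$, i.e.\ $h\circ f\in R$, then $F(h)\circ g=F(h\circ f)\circ k\in\langle F(R)\rangle$, whence $g\in F(h)^{*}\langle F(R)\rangle$; thus $\langle F(h^{*}(R))\rangle\subseteq F(h)^{*}\langle F(R)\rangle$. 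Under a reading of $(B_2)$ in the weaker sense of ``contains a member of the basis'' this inclusion already suffices.

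The reverse inclusion is the main obstacle and is exactly the point at which flatness (preservation of finite limits) of the morphism of sites $F$ must be used, which is needed for the stronger reading of $(B_2)$ parallel to $(F_3)$. Given $g$ with $F(h)\circ g\in\langle F(R)\rangle$, write $F(h)\circ g=F(f)\circ m$ for some $f\in R$; I would form the pullback $P=D\times_{C}\mathrm{dom}(f)$ in $\mathscr C$, whose projection $\pi:P\to D$ satisfies $h\circ\pi\in R$, hence $\pi\in h^{*}(R)$. Since $F$ preserves this pullback, the commuting square factors uniquely through $F(P)$, yielding $g=F(\pi)\circ u$ and therefore $g\in\langle F(h^{*}(R))\rangle$. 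This establishes $F(h)^{*}\langle F(R)\rangle=\langle F(h^{*}(R))\rangle\in F(\mathfrak B(D))$, which is $(B_2)$; combined with the previous two paragraphs it shows that $F(\mathfrak B)$ is a basis of a filter on $\mathscr D$. The single delicate hypothesis to pin down is precisely this commutation of $F$ with the pullback operation $(-)^{*}$ on sieves.
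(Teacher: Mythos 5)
Your proposal is correct, and in outline it matches the paper's proof: both verify $(B_1)$--$(B_3)$ directly for the image family. But at the decisive step your argument is genuinely more substantive than what the paper records. The paper's treatment of $(B_2)$ consists of the assertion that, since $h^{*}(S)$ lies in $\mathfrak B(D)$, the pullback ${F(h)}^{*}\bigl(F(S)\bigr)$ lies in $F\bigl(\mathfrak B(D)\bigr)$; the identity ${F(h)}^{*}\langle F(S)\rangle=\langle F(h^{*}(S))\rangle$ that this presupposes is never proved, and the paper moreover conflates the image family $F(S)$ (which need not itself be a sieve) with the sieve $\langle F(S)\rangle$ it generates. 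You supply exactly the missing lemma: the easy inclusion $\langle F(h^{*}(R))\rangle\subseteq F(h)^{*}\langle F(R)\rangle$ by pure functoriality, and the reverse inclusion by forming the pullback $D\times_{C}\mathrm{dom}(f)$ in $\mathscr C$ (available since the ambient category is finitely complete) and using that the morphism of sites preserves it. Likewise, for $(B_1)$ the paper only notes $F(S\cap T)\subseteq F(S)\cap F(T)$ and never invokes $(B_1)$ for $\mathfrak B$, whereas you correctly pass to $T\in\mathfrak B(C)$ with $T\subseteq R\cap S$ and conclude by monotonicity of $R\mapsto\langle F(R)\rangle$. What your route buys is a complete proof that pins down where flatness of $F$ is actually used; what the paper's buys is brevity, at the cost of a gap at precisely the stability axiom. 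Your closing remark --- that under the weaker ``contains a member of the basis'' reading of $(B_2)$ the easy inclusion alone suffices --- is also a worthwhile observation, since it shows the generated filter satisfies $(F_3)$ even if one does not insist on the exact commutation.
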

\begin{proof}
let $C$  be an object  of $\mathscr{C}$ 
We shall show that 
\begin{enumerate}
\item[($B_1$)] The intersection of two sieves on $F(\mathfrak B (C))$ contains a sieve on $F(\mathfrak B (C))$;
\item[($B_2$)] If $S$ is a sieve on $F(\mathfrak B (C))$ and $h: F(D)\rightarrow F(C)$ is any arrow to $F(C)$, then  $\big(F(h)\big)^{*} \big( F(S)\big)$ is a sieve on $F\big(\mathfrak B(D)\big)$.
\item[$(B_3)$] $F(\mathfrak B(C))$ is not empty, and the empty sieve is not on $F(\mathfrak B(C))$.
\end{enumerate}
First, since a morphisms of sites preserves inclusion, for sieves  $S, T$ on $\mathfrak B (C)$, we have $F(S\cap T) \subseteq F(S)\cap F(T)$.

Next, let $S$ be a sieve on $\mathfrak B (C)$ and let $h: D\rightarrow C$ be any arrow to $C$ such that  $h^{*}( S) $  is a sieve on $\mathfrak B(D))$, then $F(S)$ is a sieve on $F\big(\mathfrak B (C)\big)$ and for $F(h): F(D)\rightarrow F(C)$, we have that ${F(h)}^{*}\big( F(S)\big)$ is a sieve on $F\big(\mathfrak B(D)\big)$.

Finally, for every sieve $S$  on $\mathfrak B (C)$, the fact that $S\neq\emptyset$ implies $F(S)\neq\emptyset$.
\end{proof}
\section{Compactness on small categories}
\begin{defi}\label{compact}
Let  $(\mathscr C, J)$ be a  site and let $C$  be an object  of $\mathscr{C}$. We say that an object $C$    of $\mathscr{C}$
 \begin{itemize}
 \item  Is {\bf  quasi-compact} if, for every filter $\mathfrak F$  on  $\mathscr{C}$,  $\mathfrak F(C)$ has at least one cluster point.
 \item Is  {\bf Hausdorff} if, for every filter $\mathfrak F$  on  $\mathscr{C}$,  $\mathfrak F(C)$ has no more that one limit point. 
 \item Is  {\bf compact} if, for every filter $\mathfrak F$  on  $\mathscr{C}$,  $\mathfrak F(C)$ is quasi-compact and Hausdorff.
 \end{itemize}
\end{defi}
\begin{lemma}\label{u-comp}
Let  $(\mathscr C, J)$ be a  site and let $C$  be an object  of $\mathscr{C}$. An object $C$    of $\mathscr{C}$   is compact if and only if, for every ultrafilter $\mathfrak U$  on  $\mathscr{C}$,  $\mathfrak U(C)$ is convergent.
\end{lemma}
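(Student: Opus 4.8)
The plan is to prove the two implications of the biconditional separately, and the whole argument should reduce to assembling three facts already in hand: the Corollary that for an ultrafilter $\mathfrak U$ a point $p$ is a cluster point of $\mathfrak U(C)$ if and only if $\mathfrak U(C)$ converges to $p$; Proposition \ref{finer}, which supplies an ultrafilter finer than any given filter; and the Proposition characterizing the cluster points of $\mathfrak F(C)$ as exactly those $p$ to which some finer filter converges. These three results do essentially all the work, so the proof should be short once they are marshalled in the right order.

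For the forward direction I would assume $C$ is compact, hence quasi-compact and Hausdorff, and let $\mathfrak U$ be an arbitrary ultrafilter on $\mathscr C$. Since $\mathfrak U$ is in particular a filter, quasi-compactness applied to $\mathfrak U$ yields at least one cluster point $p$ of $\mathfrak U(C)$; the Corollary then upgrades this cluster point to a genuine limit, so $\mathfrak U(C)$ converges to $p$. The Hausdorff hypothesis ensures $p$ is the unique limit point, so $\mathfrak U(C)$ is convergent.

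For the converse I would assume that $\mathfrak U(C)$ is convergent for every ultrafilter $\mathfrak U$ and establish quasi-compactness and the Hausdorff property in turn. For quasi-compactness, take any filter $\mathfrak F$, use Proposition \ref{finer} to pick an ultrafilter with $\mathfrak F \preceq \mathfrak U$, and let $p$ be a limit of $\mathfrak U(C)$; because $\mathfrak U$ is finer than $\mathfrak F$ and converges to $p$, the cluster-point characterization immediately gives that $p$ is a cluster point of $\mathfrak F(C)$. For the Hausdorff property, suppose some filter $\mathfrak F(C)$ converged to two points $p$ and $q$; extending $\mathfrak F$ to an ultrafilter $\mathfrak U$ again by Proposition \ref{finer}, the inclusions $\mathscr N_p(C),\,\mathscr N_q(C)\subseteq \mathfrak F(C)\subseteq \mathfrak U(C)$ force $\mathfrak U(C)$ to converge to both $p$ and $q$, and the uniqueness of the limit for $\mathfrak U$ then yields $p=q$.

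The step I expect to be the real obstacle — and the one that must be phrased with care — is the Hausdorff half of the converse. It goes through only if \textbf{convergent} is read as \emph{converging to a single point}; under the weaker reading (convergence to at least one point) the ultrafilter hypothesis captures only quasi-compactness, mirroring the classical fact that ultrafilter convergence detects compactness but is blind to separation. I would therefore make this uniqueness explicit in the statement of the lemma (or in the ambient definition of convergence), after which each of the steps above is a direct invocation of the cited results with no residual computation.
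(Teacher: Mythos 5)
Your proposal is correct, and its skeleton coincides with the paper's own argument: the two steps the paper actually writes down are precisely your quasi-compactness half of the converse (extend an arbitrary filter $\mathfrak F$ to an ultrafilter $\mathfrak U$ via Proposition \ref{finer}; a limit of $\mathfrak U(C)$ is then a cluster point of $\mathfrak F(C)$ by the cluster-point characterization) and your forward direction (quasi-compactness gives the ultrafilter a cluster point, which the Corollary on ultrafilters upgrades to a limit). Where you genuinely go beyond the paper is the Hausdorff condition. The paper's proof never mentions it: as written, it establishes only the equivalence between quasi-compactness and ultrafilter convergence, even though the paper's Definition \ref{compact} declares an object compact when it is quasi-compact \emph{and} Hausdorff. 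Your additional step --- if some filter $\mathfrak F(C)$ had two limit points $p\neq q$, then any ultrafilter $\mathfrak U$ refining $\mathfrak F$ satisfies $\mathscr N_p(C)\cup \mathscr N_q(C)\subseteq \mathfrak F(C)\subseteq \mathfrak U(C)$, so $\mathfrak U(C)$ converges to both points, contradicting uniqueness of the ultrafilter limit --- is exactly what is needed to close that hole, and your diagnosis of the crux is right: the biconditional holds only if ``convergent'' is read as ``converges to exactly one point.'' Under the weaker reading (at least one limit), ultrafilter convergence detects quasi-compactness alone and is blind to the Hausdorff half of the definition, which is the classical situation as well. So your proof is not merely a rederivation; it is the completed version of an argument that, in the paper, is left incomplete relative to its own definitions, at the cost of making an interpretive convention explicit that the paper leaves silent.
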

\begin{proof}
First suppose that $\mathfrak F$  is a filter on  $\mathscr{C}$. Proposition \ref{finer}
ensures that, for  every filter, there exists an ultrafilter $\mathfrak U$  finer than $\mathfrak F$, such that $\mathfrak U(C)$ converges to a point $p$ on $C$, therefore $p$ is a cluster point of $\mathfrak F(C)$.

Conversely, if, for an ultrafilter $\mathfrak U$, $\mathfrak U(C)$ has a cluster point then it converges to this point.
\end{proof}
\begin{prop}
Let $(\mathscr C, \mathfrak F)$ and $(\mathscr D, \mathfrak G)$  be  small categories  equipped with filters and $F:\mathscr C\longrightarrow \mathscr D$ a filter-preserving (continuous) functor. If  $C$ is a compact object   of $\mathscr{C}$, then   $\mathfrak F(C)$ is a compact object of $\mathscr D$.
\end{prop}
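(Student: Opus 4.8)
The plan is to reduce the compactness of $F(C)$ to the convergence of ultrafilters via Lemma~\ref{u-comp}, and then to carry convergence across $F$ using the image constructions of the previous section. (I read the conclusion as the assertion that $F(C)$ is a compact object of $\mathscr D$.) Accordingly, I would fix an arbitrary ultrafilter $\mathfrak V$ on $\mathscr D$, and by Lemma~\ref{u-comp} it suffices to show that $\mathfrak V(F(C))$ is convergent.

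To bring the compactness of $C$ into play, I would first pull $\mathfrak V$ back along $F$: for each object $A$ of $\mathscr C$ and each sieve $T\in\mathfrak V(F(A))$, form the sieve $F^{-1}(T)=\{\,g\mid cod(g)=A,\ F(g)\in T\,\}$ on $A$, and collect these into a function on $\mathscr C$. After checking that this collection is a filter subbase, Proposition~\ref{finer} provides an ultrafilter $\mathfrak U$ on $\mathscr C$ containing it.

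Since $C$ is compact, Lemma~\ref{u-comp} tells us that $\mathfrak U(C)$ converges to some point $p:1\rightarrow C$, i.e.\ $\mathscr N_{p}(C)\subseteq\mathfrak U(C)$. I would then push everything forward through $F$. Proposition~\ref{ne-im} sends each $\mathfrak G$-neighborhood $V$ of $p$ to a $\mathfrak G$-neighborhood $\langle F(V)\rangle$ of $F(p)$, Proposition~\ref{cov-ne-im} promotes this to the statement that cover-neighborhoods of $p$ map to cover-neighborhoods of $F(p)$, and Proposition~\ref{fil-im} guarantees that $F(\mathfrak U(C))$ is again a filter base on $\mathscr D$. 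Together these give $\mathscr N_{F(p)}(F(C))\subseteq\langle F(\mathfrak U)\rangle(F(C))$, so the image filter converges to $F(p)$. Finally, for each $T\in\mathfrak V(F(C))$ one has $\langle F(F^{-1}(T))\rangle\subseteq T$ because $T$ is a right ideal, so $T$ lies in the image filter; hence the image filter refines the ultrafilter $\mathfrak V$ at $F(C)$ and, by maximality, agrees with it there. Therefore $\mathfrak V(F(C))$ converges to $F(p)$, which by Lemma~\ref{u-comp} yields the compactness of $F(C)$.

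The step I expect to be the main obstacle is the properness of the pull-back in the second paragraph: one must verify that no finite subcollection of the sieves $F^{-1}(T)$ has empty intersection, so that the family really is a filter subbase. This is exactly the categorical shadow of the surjectivity assumption in the classical theorem that a continuous image of a compact space is compact: the non-emptiness of $F^{-1}(T)$ requires that sieves (and the points witnessing them) over $F(C)$ in $\mathscr D$ admit lifts over $C$ in $\mathscr C$. Without some form of (essential) surjectivity of $F$ onto the relevant objects this can fail, so I would either impose such a hypothesis or restrict attention to the image of $F$, where the inverse images are automatically proper.
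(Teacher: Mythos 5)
Your proposal follows a genuinely different --- and substantially more complete --- route than the paper. The paper's entire proof of this proposition is the single sentence ``This is a consequence of propositions \ref{ne-im}, \ref{cov-ne-im}, \ref{fil-im} and lemma \ref{fil-im}'' (the last citation evidently meant to be Lemma~\ref{u-comp}); that is, the paper only ever pushes data \emph{forward} along $F$. A push-forward-only argument can show at most that filters of the form $\langle F(\mathfrak U)\rangle$ converge at $F(C)$; it never engages an \emph{arbitrary} ultrafilter $\mathfrak V$ on $\mathscr D$, which is what Definition~\ref{compact} and Lemma~\ref{u-comp} demand. Your second paragraph --- pulling $\mathfrak V$ back along $F$, extending to an ultrafilter $\mathfrak U$ by Proposition~\ref{finer}, then pushing forward --- is exactly the step the paper omits. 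Consequently, the obstacle you flag is real, and it is a defect of the paper's proposition itself rather than of your attempt: a sieve $T\in\mathfrak V(F(A))$ need not contain the identity of $F(A)$, so nothing forces $T$ to contain any morphism of the form $F(g)$, and $F^{-1}(T)$ may well be empty (and even when each $F^{-1}(T)$ is nonempty, finite intersections may be empty). This is the precise categorical analogue of the fact that for a continuous $f:X\rightarrow Y$ with $X$ compact, $Y$ itself need not be compact when $f$ is not surjective; some lifting or essential-surjectivity hypothesis (or passage to the image of $F$), as you propose, genuinely has to be added.

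There is, however, a second gap in your argument that you did not flag: the final appeal to maximality. Maximality of the ultrafilter $\mathfrak V$ is a \emph{global} property --- no filter on all of $\mathscr D$ is strictly finer --- so refinement at the single object $F(C)$ does not entitle you to conclude that the image filter agrees with $\mathfrak V$ there; and the image filter need not refine $\mathfrak V$ at objects outside the image of $F$, so the global hypothesis is unavailable. Note also that the inclusion you do have, $\mathfrak V(F(C))\subseteq \mathscr I(F(C))$ where $\mathscr I$ denotes the filter generated by $\langle F(\mathfrak U)\rangle$, goes the wrong way: convergence of the finer family does not pass down to the coarser one. The clean repair avoids maximality entirely. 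Both $\mathfrak V(F(C))$ and $\mathscr N_{F(p)}(F(C))$ lie inside $\mathscr I(F(C))$, which is closed under finite intersections and does not contain the empty sieve; hence every $T\in\mathfrak V(F(C))$ meets every cover-neighborhood of $F(p)$, so $F(p)$ is a cluster point of $\mathfrak V(F(C))$. The paper's corollary that an ultrafilter converges to a point if and only if that point is a cluster point then yields convergence of $\mathfrak V(F(C))$, as required by Lemma~\ref{u-comp}. With that substitution, and with the properness of the pull-back secured by an added hypothesis, your argument is complete --- and it is considerably more of a proof than the one the paper records.
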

\begin{proof}
This is a consequence of propositions \ref{ne-im}, \ref{cov-ne-im}, \ref{fil-im} and lemma \ref{fil-im}.
\end{proof}
\begin{teor}(Tychonoff)
 Let  $(\mathscr C, J)$ be a  site. Every product of compact objects in the  category $\mathscr C$ is compact. 
\end{teor}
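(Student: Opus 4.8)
The plan is to deduce compactness of $C=\prod_{i\in I}C_i$ from the ultrafilter criterion of Lemma~\ref{u-comp}, which reduces the assertion to showing that for every ultrafilter $\mathfrak U$ on $\mathscr C$ the image $\mathfrak U(C)$ converges to some point. So I would fix an ultrafilter $\mathfrak U$ and, for each index $i$, invoke compactness of the factor $C_i$: by Lemma~\ref{u-comp} applied at $C_i$ to this same $\mathfrak U$, the value $\mathfrak U(C_i)$ converges to a point $p_i\colon 1\to C_i$, i.e.\ $\mathscr N_{p_i}(C_i)\subseteq\mathfrak U(C_i)$. A pleasant feature of the global notion of filter used here is that one need not transport $\mathfrak U$ across the projections by hand; the factor value $\mathfrak U(C_i)$ is already present and already convergent.

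Next I would assemble the limit on the product. By the universal property of $C=\prod_{i\in I}C_i$, the family $(p_i)_{i\in I}$ determines a unique point $p\colon 1\to C$ with $pr_i\circ p=p_i$ for all $i$, where $pr_i\colon C\to C_i$ is the $i$-th projection. The goal then becomes the single containment $\mathscr N_p(C)\subseteq\mathfrak U(C)$, i.e.\ convergence of $\mathfrak U(C)$ to $p$ in the sense of Definition~\ref{converges}.

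The core step is to control the cover-neighborhoods of $p$ on the product by means of the basis of Proposition~\ref{fil-prod} and its corollary, which presents the natural sieves on $C$ as finite intersections $\bigcap_{i\in F}pr_i^{*}(V_i)$ over a finite set $F\subseteq I$, all remaining coordinates carrying the maximal sieve $t_{C_i}$. Given an arbitrary $\mathfrak G$-neighborhood of $p$, I would locate inside it one such basic sieve whose finitely many nontrivial factors $V_i$ are $\mathfrak G$-neighborhoods of the respective $p_i$. Because $\mathfrak U(C_i)$ converges to $p_i$, each $V_i$ lies in $\mathfrak U(C_i)$; axiom $(F_3)$ then places every pullback $pr_i^{*}(V_i)$ in $\mathfrak U(C)$, and axiom $(F_2)$ places their finite intersection in $\mathfrak U(C)$. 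Since every member of $\mathscr N_p(C)$ contains such a basic sieve, axiom $(F_1)$ finally yields $\mathscr N_p(C)\subseteq\mathfrak U(C)$.

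I expect the principal obstacle to be exactly the geometric matching in the previous paragraph: verifying that a finite intersection $\bigcap_{i\in F}pr_i^{*}(V_i)$ is genuinely a $\mathfrak G$-neighborhood of $p$, and that such sieves exhaust the $\mathfrak G$-neighborhoods of $p$ up to inclusion. Concretely, if $\phi_i\colon D_i\to C_i$ in $V_i$ and $q_i\colon 1\to D_i$ witness $V_i$ as a neighborhood of $p_i$, one must gather the $q_i$ for $i\in F$ together with the points $p_j$ for $j\notin F$, via the universal property of $E=\prod_{i\in F}D_i\times\prod_{j\notin F}C_j$, into a single witnessing point $q\colon 1\to E$ whose composite with the evident morphism $E\to C$ returns $p$. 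This is the one place where the ``maximal sieve outside a finite set of indices'' hypothesis of Proposition~\ref{fil-prod} is indispensable; granting it, the remaining filter-axiom bookkeeping is routine.
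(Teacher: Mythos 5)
Your overall strategy is sound as far as it goes: fixing a single ultrafilter $\mathfrak U$, extracting limits $p_i$ of the $\mathfrak U(C_i)$ via Lemma~\ref{u-comp}, assembling the candidate limit $p:1\to C$ by the universal property of the product, and doing the $(F_3)$--$(F_2)$--$(F_1)$ bookkeeping is all correct, as is the half of your ``principal obstacle'' that you actually settle, namely that $\bigcap_{i\in F}pr_i^{*}(V_i)$ is a $\mathfrak{G}$-neighborhood of $p$ (your witnessing point $q:1\to E$ does the job, and the intersection is covering by stability and closure of $J$ under finite intersections). The genuine gap is the other half, which you name but never prove and which cannot be proved from the paper's definitions: the claim that \emph{every} $\mathfrak{G}$-neighborhood of $p$ on $C=\prod_{i\in I}C_i$ contains such a basic sieve. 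A $\mathfrak{G}$-neighborhood of $p$ is an arbitrary sieve $V\in J(C)$ through which $p$ factors, and $J$ is an arbitrary Grothendieck topology on $\mathscr C$: nothing ties $J\bigl(\prod_i C_i\bigr)$ to the topologies $J(C_i)$ on the factors. In topology the corresponding cofinality statement is true by \emph{definition} of the product topology; here there is no ``product site'' axiom playing that role. For instance, when the atomic topology is available, the sieve generated by a single arrow $\phi:D\to C$ with $\phi\circ q=p$ is a $\mathfrak{G}$-neighborhood of $p$, and it will in general contain no sieve of the form $\bigcap_{i\in F}pr_i^{*}(V_i)$. Without this cofinality, your final appeal to $(F_1)$ has no premise, and the inclusion $\mathscr N_p(C)\subseteq\mathfrak U(C)$ is not established.

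For comparison, the paper's own proof does not fill this gap either; it takes a different and even more fragmentary route, invoking Proposition~\ref{fil-prod} to build the product filter basis $\mathfrak B(C)=\bigl\{\prod_{i\in I}S_i\bigr\}$, asserting that it is a basis of an ultrafilter, and stopping there without exhibiting a limit point or checking convergence. So your proposal is structurally closer to a complete argument than the text, but both run aground on the same missing ingredient: a hypothesis relating covering sieves of a product to covering sieves of its factors, i.e.\ the categorical surrogate of the definition of the product topology. Any complete proof in this framework must either impose such a compatibility condition on the site or restrict the class of Grothendieck topologies considered.
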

\begin{proof}
Suppose we have  chosen a collection $(C_{i})_{i\in I}$  of compact objects in $\mathscr C$; equivalently, for every family $( \mathfrak U_{i} )_{i\in I}$ of ultrafilters  on   $\mathscr{C}$, $\mathfrak U_i(C_i)$ is convergent.
Then the function $\mathfrak B$ which assigns to  each object\linebreak $C=\displaystyle \prod_{i\in I} C_{i}$, the collection of sieves $ \mathfrak B(C)=\Big\{\displaystyle \prod_{i\in I}S_i \mid S_i\in \mathfrak F_{i}(C)\Big\}$, where $S_i=t_{C_i}$ is the maximal sieve on $C_i$  except for a finite number of indices, is basis of an ultrafilter on   $\mathscr{C}$.
\end{proof}

\section{Filters and compactness on locales}

There is a significant `generalization' of the notion of topological space,
namely the notion of locale.
The object of this section is to present an alternative approach to the notion of
compactness on locales, different from (but not entirely independent of) the one which we have followed previously.

From  P.J. Johnstone ({\cite{PJ1}) and A. J. Lindenhovius  (\cite{AL}), we take the following ideas
\begin{defi}\
\begin{enumerate}
\item  The category  ${\bf Frm}$ of frames is the category whose objects are complete lattices satisfying the infinite distributive law, and \linebreak whose morphisms are functions preserving finite meets and arbi\-trar\-y joins.
\item The category ${\bf Loc}$ of locales is the opposite of the category ${\bf Frm}$. We refer to morphisms in ${\bf Loc}$ as continuous maps, and write $\Omega$ for the functor ${\bf Sp}\rightarrow {\bf Loc}$ which sends a topological  space to its lattice of open sets, and a continuous map $f:X\rightarrow Y$ to the function 
$f^{-1}: \Omega(Y)\rightarrow \Omega(X)$.
\end{enumerate}
\end{defi}

\begin{defi}
Let $(P,\leqslant)$ be a lattice and $M \subseteq P$. We say that
\begin{itemize}
\item  $a\in P$ is a join (or least upper bound) for $M$, and write \linebreak $a=\bigvee M$ if
\begin{enumerate}
\item $a$ is an upper bound for $M$, i.e. $m\leqslant a$ for all $m\in M$, and
\item if $b$ satisfies for all $m\in M \ (m\leqslant b)$ then $a\leqslant b$.
\end{enumerate}
\item  If $M$ is a two-element set $\{m, k\}$, we write $m\vee k$ for $\bigvee\{m,k\}$ \item  if $M$ is the empty set $\emptyset$, we write $0$ for $\bigvee \emptyset$,  clearly $0$ is just the least element of $M$.
\item Dually, in any lattice we can consider the notion of meet  (greatest lower bound), defined by reversing all the inequalities in the
definition of join. We write $\bigwedge M$,\,\ $a\land b$ and $1$ for the analogues of $\bigvee M$,\.\ $a\vee b$ and $0$. 

\item  $M$ is an up-set if for each $x \in M$ and $y \in P$ we have $x \leqslant y$ implies $y \in M$.
\item  Similarly, $M$ is called a down-set if for each $x \in M$ and $y\in P$ we have $y\leqslant x$ implies $y \in M$. 
\item Given an element $x \in P$, we define the up-set and down-set generated by $x$ by $\uparrow x = \{y \in P : x\leqslant y\}$ and $\downarrow x = \{y \in P : y \leqslant x\}$, respectively.
\item We can also define the up-set generated by a subset $M$ of $P$ by  $\uparrow M = \{x \in P : m \leqslant x \,\ \text{for some}\,\ m \in M\} =\bigcup_{m\in M} \uparrow m$, and similarly, we define the down-set generated by $M$ by \linebreak $\downarrow M =\bigcup_{m\in M} \downarrow m$.
\end{itemize}
\end{defi}
\begin{notation}
We denote the collection of all up-sets of a partially ordered set $P$ by $\mathcal U(P)$ and the set of all down-sets by $\mathcal D(P)$.
\end{notation} 

\begin{ex}
For $Z^{+}_{D} :=(\mathbb Z^{+}, \leqslant_{|})$, where $\leqslant_{|}$ is the  multiplicative \linebreak (or divisibility) partial order on  the set $\mathbb Z^{+}$ of positive integers, we can observe that
\begin{itemize}
\item The down-set generated by an element  $n$ of $\mathbb Z^{+}$ is the collection $\downarrow n = \{k \in \mathbb Z^{+} : k \leqslant_{|} n\}= \mathfrak D_n$, the set of all divisors of $n$.
\item If $M\subseteq  \mathbb Z^{+}$ then the down-set generated by $M$ is\linebreak $\downarrow M =\bigcup_{m\in M} \mathfrak D_m$.
\item The up-set generated by $n\in \mathbb Z^{+}$ is $\uparrow n = \{k \in \mathbb Z^{+} : n \leqslant_{|} k\}$, i.e. the set $\mathfrak M_n$ of all multiples of $n$.
\item If $P\subseteq  \mathbb Z^{+}$ then the up-set generated by $P$ is $\uparrow M =\bigcup_{p\in P} \mathfrak M_p$.
\item An interesting fact about $\mathfrak D_n$, the set of all divisors of $n$,  is that it is a {\bf locale}.
\item Using the Fundamental Theorem of Arithmetic it is easy to show  that  if $n\in \mathbb Z^{+}$ is the product of a finite number of distinct primes then $\mathfrak D_n$ is a Boolean Algebra.
\end{itemize}
\end{ex}

\subsection{Grothendieck topologies on  locales}
In order to construct Grothendieck topologies, we first define sieves.
\begin{defi}
Given an element $k$ in a locale $L$, a subset $S$ of $L$ is called a sieve on $k$ if $S\in \mathcal D(\downarrow k)$.
\end{defi}
\begin{defi}
A Grothendieck topology  on  a locale $L$ is a function $J$ which assigns to each object $k$ of $L$  a collection $J(k)$ of sieves on $k$, in such a way that
\begin{enumerate}
\item[(i)] the maximal sieve $\downarrow k$  is in $J(k)$;
\item[(ii)] (stability axiom) if $S\in J(k)$ and $m\leqslant k$ then $S\cap (\downarrow m)$ is in $J(m)$;
\item[(iii)] (transitivity axiom) if $S\in J(k)$ and $R$ is any sieve on $k$ such that $R\cap (\downarrow m)$ is in $J(m)$ for each $m\in S$, then $R\in J(k)$.
\end{enumerate}
\end{defi}
\begin{ex}\
\begin{itemize}
\item The trivial Grothendieck topology on $L$ is given by $J_{tri}(n) = \downarrow n$.
\item The discrete Grothendieck topology on the lattice $L$ is given by\linebreak $J_{dis}(n) = \mathcal D(\downarrow n) $.
\item The atomic Grothendieck topology on $L$ can only be defined if $L$ is downwards
directed, and is given by $J_{atom}(n) = \mathcal D(\downarrow n)- \{\emptyset\}$.
\item A subset $D\subseteq (\downarrow n)$ is said to be dense below $n$ if for any $m\leqslant n$ there exists $k\leqslant m$ with $k\in D$. 
\item The dense topology on $L$ is given by $$J(n)= \{ D\mid k\leqslant n\,\ \text{for all}\,\ k\in D,\,\ \text{and $D$ is a sieve dense below}\,\ n\}.$$
\end{itemize}
\end{ex}

\subsection{$S$-Filters on  locales}

Recall that a basic notion in complete lattices is that of {\bf filter}:
a filter $F$ of $L$ is a non-empty subset of $L$ such that
\begin{enumerate}
\item $F$ is a sublattice of $ L$ ,  and
\item for any $a\in F$ and $b\in L$, $a\lor b \in  F$.
\end{enumerate}
In a different direction, we have the notion of {\bf filter of sieves (ideals)} on a locale $L$, which  we shall call  {\bf $S$-filter}.
\begin{defi}
An $S$-filter   on  a locale $L$ is a function $\mathfrak F$ which assigns to each object $k$ of $L$  a collection $\mathfrak F(k)$ of sieves on $k$, in such a way that
\begin{enumerate}
\item[($F_{1}$)] if $S\in \mathfrak F(k)$ and $R$ is a sieve on $k$ such that $S\subseteq R$, then   $R\in \mathfrak F(k)$;
\item[($F_{2}$)] every finite intersection of sieves of $\mathfrak F(k)$ belongs to $\mathfrak F(k)$;
\item[($F_3$)] if $S\in J(k)$ and $m\leqslant k$ then $S\cap (\downarrow m)$ is in $\mathfrak F(m)$;
\item[($F_4$)] the empty sieve is not in $\mathfrak F(k)$.
\end{enumerate}
\end{defi}
\begin{ex}
From the definition of a Grothendieck topology $J$ on a locale $L$  it follows that for each object $k$ of $L$ and that
\begin{itemize}
\item for $S\in J(k)$ any larger sieve $R$ on $C$ is also a member of $J(k)$. 
\item It is also clear that if $R; S\in J(k)$ then $R\cap S \in J(k)$, 
\item consequently some Grothendieck topologies produce  $S$-filters in the same locale $L$: they are exactly those for which   
\be \label{non-dis}
R\cap S\neq\emptyset\,\,\ \text{for all pairs}\,\,\ R; S\in J(k)
\ee
 and such that  the empty sieve is not in $J(k)$.
\item Clearly, the {\bf trivial topology} on $k$ is a $S$-filter we shall call it {\bf trivial $S$-filter}.
\item It is also clear that the {\bf atomic topology} on $L$ (see \cite{MM})  is not a $S$-filter.
\end{itemize}
\end{ex}
\begin{defi}
A basis of an $S$-filter  on a locale $L$ is a function $\mathfrak B$ which assigns to each  $k\in L$ a  collection $\mathfrak B(k) $ of sieves on $k $, in such a way that 
\begin{enumerate}
\item[($B_1$)] The intersection of two sieves of $\mathfrak B(k)$ contains a sieve of $\mathfrak B(k)$;

\item[($B_2$)] if $S$ is a sieve on $\mathfrak B(k)$ and $m\leqslant k$ then $S\cap (\downarrow m)$ is in $\mathfrak B(m)$;

\item[$(B_3)$] $\mathfrak B(k)$ is not empty and the empty sieve is not in $\mathfrak B(k)$.
\end{enumerate}
\end{defi}
\begin{prop}
If $\mathfrak B$ is a basis of $S$-filter  on a locale $L$, then  $\mathfrak B$ generates an $S$-filter  $\mathfrak F$ by
$
S \in \mathfrak F(k) \Leftrightarrow \exists R\in \mathfrak B(k)\,\ \text{such that}\,\ R\subseteq S
$
for each object $k\in L$.
\end{prop}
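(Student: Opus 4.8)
The plan is to verify directly the four defining axioms $(F_1)$--$(F_4)$ of an $S$-filter for the function $\mathfrak F$, exactly as in the category-theoretic analogue established earlier. First I would record that every member of $\mathfrak F(k)$ is by construction a sieve on $k$, so $\mathfrak F$ genuinely assigns to each $k\in L$ a collection of sieves on $k$; it then remains only to confirm the four closure conditions.

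Axioms $(F_1)$ and $(F_4)$ I would dispatch immediately. For $(F_1)$, if $S\in\mathfrak F(k)$ and $S\subseteq R$ for a sieve $R$ on $k$, choose $R_0\in\mathfrak B(k)$ with $R_0\subseteq S$; transitivity of inclusion yields $R_0\subseteq R$, whence $R\in\mathfrak F(k)$. For $(F_4)$, the only sieve contained in the empty sieve is the empty sieve itself, so $\emptyset\in\mathfrak F(k)$ would force $\emptyset\in\mathfrak B(k)$, contradicting $(B_3)$.

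The substance of the argument lies in $(F_2)$. Given $S_1,\dots,S_n\in\mathfrak F(k)$, I would choose $R_i\in\mathfrak B(k)$ with $R_i\subseteq S_i$ for each $i$, and then argue by induction on $n$ that some single $R\in\mathfrak B(k)$ satisfies $R\subseteq R_1\cap\cdots\cap R_n$: the case $n=1$ is trivial, and for the inductive step one applies $(B_1)$ to the basis element $R'$ furnished by the hypothesis (so $R'\subseteq R_1\cap\cdots\cap R_n$) together with $R_{n+1}$, obtaining $R\in\mathfrak B(k)$ with $R\subseteq R'\cap R_{n+1}\subseteq R_1\cap\cdots\cap R_{n+1}$. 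Since $R\subseteq S_1\cap\cdots\cap S_n$, the intersection lies in $\mathfrak F(k)$; if one also counts the empty intersection, nonemptiness of $\mathfrak B(k)$ from $(B_3)$ gives $\downarrow k\in\mathfrak F(k)$. For $(F_3)$, given $S\in\mathfrak F(k)$ and $m\leqslant k$, take $R\in\mathfrak B(k)$ with $R\subseteq S$; by $(B_2)$ the pullback $R\cap(\downarrow m)$ lies in $\mathfrak B(m)$, and monotonicity of intersection gives $R\cap(\downarrow m)\subseteq S\cap(\downarrow m)$, so that $S\cap(\downarrow m)\in\mathfrak F(m)$.

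The only point demanding genuine care is the induction in $(F_2)$: $(B_1)$ supplies a basis element only below the intersection of \emph{two} sieves, so it must be iterated, checking at each stage that the witness remains inside $\mathfrak B(k)$ rather than merely inside the ambient lattice $\mathcal D(\downarrow k)$ of all sieves on $k$. A secondary, easily overlooked point is that $S\cap(\downarrow m)$ is again a sieve on $m$, which is what makes the conclusion of $(F_3)$ well posed; this holds because $\downarrow m$ is itself a sieve on $m$ and intersections of sieves are sieves. Beyond these, no step presents a real obstacle, and the proposition reduces to a routine verification paralleling the filter-from-basis result for categories.
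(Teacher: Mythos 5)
Your proposal is correct: it carries out exactly the direct verification of axioms $(F_1)$--$(F_4)$ that the paper asserts without detail (its entire proof is the remark ``It is easy to check that this, indeed, defines a $S$-filter from a basis $\mathfrak B$''). Your handling of the induction in $(F_2)$ via iterated use of $(B_1)$, and of stability $(F_3)$ via $(B_2)$ plus monotonicity of intersection, fills in the omitted routine details faithfully, so there is nothing to flag.
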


It is easy to check that this, indeed, defines a $S$-filter  from a basis $\mathfrak B$.

\subsection{The ordered set of all $S$-filters on a locale}
\begin{defi}
Given two $S$-filters $\mathfrak F_1$,\,\ $\mathfrak F_2$ on the same locale $L$,\  $\mathfrak F_2$ is said to be finer than $\mathfrak F_1$, or $\mathfrak F_1$ is coarser than  $\mathfrak F_2$, if  $\mathfrak F_1(k) \subseteq  \mathfrak F_2(k)$ for all  $k \in L$.
\end{defi}
In this way, the set of all $S$-filters on a locale $L$ is ordered by the relation {\bf{ ``$\mathfrak F_1$ is coarser than  $\mathfrak F_2$"}}.

Let $(\mathfrak F_i)_{i\in I}$ be a nonempty family of $S$-filters on a $L$; then the function $\mathfrak F$ which assigns to each object $k\in L$ the collection $\mathfrak F(k)=\bigcap_{i\in I}\mathfrak F_i(k)$ is manifestly a $S$-filter on  $L$ and is obviously the greatest lower bound of the family $(\mathfrak F_i)_{i\in I}$ on the ordered set of all $S$-filters on   $L$.
\begin{defi}
An $S$-ultrafilter on  a locale $L$ is a $S$-filter $\mathfrak U$ such that there is no $S$-filter on $L$ which is strictly finer than $\mathfrak U$.
\end{defi}
Using the Zorn lemma, we deduce that
\begin{prop}\label{L-finer}
If $\mathfrak F$ is any $S$-filter on a locale $L$,  there is an $S$-ultrafilter finer than $\mathfrak F$ on $L$.
\end{prop}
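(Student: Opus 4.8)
The plan is to mimic the classical construction of a maximal filter via Zorn's lemma, exactly as in the argument underlying Proposition \ref{finer} in the categorical setting. First I would let $\Phi$ denote the collection of all $S$-filters $\mathfrak H$ on $L$ that are finer than $\mathfrak F$, i.e. those with $\mathfrak F(k)\subseteq \mathfrak H(k)$ for every $k\in L$, ordered by the ``finer than'' relation. This poset is nonempty, since $\mathfrak F$ itself belongs to $\Phi$, and by the remarks preceding this proposition the relation is a genuine partial order.

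To invoke Zorn's lemma it remains to show that every chain in $\Phi$ admits an upper bound. Given a totally ordered family $(\mathfrak F_j)_{j\in J}$ in $\Phi$, I would define the candidate upper bound $\mathfrak G$ objectwise by $\mathfrak G(k)=\bigcup_{j\in J}\mathfrak F_j(k)$ and claim that $\mathfrak G$ is again an $S$-filter finer than every $\mathfrak F_j$. The axioms $(F_1)$, $(F_3)$ and $(F_4)$ are inherited directly: a sieve lying in $\mathfrak G(k)$ already lies in some single $\mathfrak F_j(k)$, so enlarging it to a larger sieve, intersecting it with $\downarrow m$ for $m\leqslant k$, or checking that it is nonempty can all be verified inside that one $\mathfrak F_j$, whose membership is contained in $\mathfrak G$.

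The one step requiring the chain hypothesis is the finite-intersection axiom $(F_2)$, and this is where I expect the only real content to lie. Given finitely many sieves $S_1,\dots,S_n\in\mathfrak G(k)$, each $S_i$ belongs to some $\mathfrak F_{j_i}(k)$; because $(\mathfrak F_j)_{j\in J}$ is totally ordered there is a largest member $\mathfrak F_{j^\ast}$ among $\mathfrak F_{j_1},\dots,\mathfrak F_{j_n}$, so that all the $S_i$ simultaneously lie in $\mathfrak F_{j^\ast}(k)$; since $\mathfrak F_{j^\ast}$ is an $S$-filter, $\bigcap_{i=1}^{n}S_i\in\mathfrak F_{j^\ast}(k)\subseteq\mathfrak G(k)$. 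Thus $\mathfrak G\in\Phi$ and is an upper bound for the chain. The totality of the chain is used precisely to collapse the finitely many indices $j_1,\dots,j_n$ into a single one; without it the union need not be closed under finite intersections.

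Finally, Zorn's lemma yields a maximal element $\mathfrak U$ of $\Phi$. By construction $\mathfrak U$ is finer than $\mathfrak F$, and I would close the argument by observing that maximality in $\Phi$ already forces genuine maximality among all $S$-filters on $L$: any $S$-filter strictly finer than $\mathfrak U$ would automatically be finer than $\mathfrak F$ and hence lie in $\Phi$, contradicting the maximality of $\mathfrak U$. Therefore $\mathfrak U$ is an $S$-ultrafilter finer than $\mathfrak F$, as required.
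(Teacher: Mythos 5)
Your proof is correct and takes the same approach the paper intends: the paper simply states ``Using the Zorn lemma, we deduce that'' and omits all details, and your argument supplies exactly the standard verification (chains of $S$-filters are bounded by objectwise unions, with the chain hypothesis needed only for axiom $(F_2)$) that this invocation presupposes.
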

\begin{prop}
Let $\mathfrak{U}$ be an $S$-ultrafilter on a locale $L$, and let $k\in L$ . Let $ S,T$ be sieves on $k$ such that $S\cup T \in \mathfrak{U}(k)$ then either $S \in \mathfrak{U}(k)$ or $T \in \mathfrak{U}(k)$.
\end{prop}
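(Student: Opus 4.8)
The plan is to mimic the proof of the corresponding splitting property for ultrafilters on a category (the proposition in Section~3), transported to the locale setting. I would argue by contradiction: assume that $S\cup T\in\mathfrak U(k)$ while neither $S\in\mathfrak U(k)$ nor $T\in\mathfrak U(k)$, and from this manufacture an $S$-filter strictly finer than $\mathfrak U$, contradicting the maximality of the $S$-ultrafilter.

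Concretely, I would introduce the auxiliary function $\mathfrak T$ which assigns to each $m\leqslant k$ the collection
\[
\mathfrak T(m)=\{R\in Sieve(m)\mid R\cup (S\cap\downarrow m)\in\mathfrak U(m)\},
\]
agreeing with $\mathfrak U$ on elements not below $k$, and then verify the four $S$-filter axioms. Axiom $(F_1)$ is immediate from the monotonicity of $R\mapsto R\cup(S\cap\downarrow m)$ together with the upward closure of $\mathfrak U$. Axiom $(F_2)$ rests on the distributive identity $(R_1\cup S)\cap(R_2\cup S)=(R_1\cap R_2)\cup S$ (with $S$ replaced by $S\cap\downarrow m$ at level $m$), valid in the lattice of sieves, so that a finite intersection in $\mathfrak T$ corresponds to a finite intersection in $\mathfrak U$. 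For the stability axiom $(F_3)$ I would use $(R\cup S)\cap\downarrow m=(R\cap\downarrow m)\cup(S\cap\downarrow m)$ together with the stability of $\mathfrak U$ to pass from level $k$ to level $m\leqslant k$. Axiom $(F_4)$ at the top level is exactly the standing assumption $S=\emptyset\cup S\notin\mathfrak U(k)$. Once $\mathfrak T$ is confirmed to be an $S$-filter, it is finer than $\mathfrak U$ (if $R\in\mathfrak U(m)$ then $R\subseteq R\cup(S\cap\downarrow m)$, whence $R\in\mathfrak T(m)$), and it is strictly finer because $T\in\mathfrak T(k)$---since $T\cup S=S\cup T\in\mathfrak U(k)$---whereas $T\notin\mathfrak U(k)$. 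This contradicts maximality and forces $S\in\mathfrak U(k)$ or $T\in\mathfrak U(k)$.

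I expect the main obstacle to be the careful verification of $(F_3)$ and $(F_4)$ at the lower levels $m<k$, rather than at $k$ itself. The distributive identities make the algebra routine, but one must check that the empty sieve is genuinely excluded from every $\mathfrak T(m)$, i.e.\ that $S\cap\downarrow m\notin\mathfrak U(m)$; this does not follow formally from $S\notin\mathfrak U(k)$, since the $S$-filter axioms propagate only downward, via restriction, and never upward. Hence the definition of $\mathfrak T$ below $k$ must be arranged so that $(F_4)$ is preserved. Isolating this point---and confirming that the strict-refinement witness $T$ lives at the top object $k$, where the contradiction with maximality is clean---is the crux; the remaining verifications are the same formal manipulations as in the categorical case and go through verbatim.
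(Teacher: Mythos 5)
You have transported, essentially verbatim, the paper's own argument (the paper's proof is a transcription of its Section~3 categorical proof, $h^{*}$ notation included), and the obstacle you flag at the end is not a technical point to be isolated later: it defeats the construction as defined, and the obvious rearrangements fare no better. Indeed, we may assume $S\neq\emptyset$ (otherwise $S\cup T=T\in\mathfrak U(k)$ and there is nothing to prove). Pick any $m\in S$. Since $S$ is a sieve, $\downarrow m\subseteq S$, so $S\cap(\downarrow m)=\,\downarrow m$ is the \emph{maximal} sieve on $m$; moreover $\downarrow m\in\mathfrak U(m)$, because $(S\cup T)\cap(\downarrow m)\in\mathfrak U(m)$ by stability $(F_3)$ of $\mathfrak U$ and $\mathfrak U(m)$ is upward closed by $(F_1)$. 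Hence $\emptyset\cup\bigl(S\cap(\downarrow m)\bigr)\in\mathfrak U(m)$, i.e.\ the empty sieve lies in your $\mathfrak T(m)$: axiom $(F_4)$ fails at \emph{every} $m\in S$, not just conceivably at some. The fallback of letting $\mathfrak T$ agree with $\mathfrak U$ at all $m\neq k$ breaks $(F_3)$ instead: stability from level $k$ would require $R\cap(\downarrow m)\in\mathfrak U(m)$ for every $R$ with $R\cup S\in\mathfrak U(k)$, which the hypotheses do not yield. (To be fair, the paper's own proof has exactly this hole; it is hidden behind the word ``Evidently'' in its verification of $(F_4)$.)

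What rescues the statement is a feature special to locales that makes the auxiliary filter unnecessary. Every sieve on $k$ is a down-set of $\downarrow k$, so every nonempty sieve contains the bottom element $0$ of $L$; consequently any finite family of nonempty sieves has nonempty intersection, and any restriction $S\cap(\downarrow m)$ of a nonempty sieve is nonempty. It follows that the assignment $\mathfrak M(k)=\{\,\text{all nonempty sieves on }k\,\}$ satisfies $(F_1)$--$(F_4)$, i.e.\ is an $S$-filter. By $(F_4)$, every $S$-filter $\mathfrak F$ satisfies $\mathfrak F(k)\subseteq\mathfrak M(k)$ for all $k$, so $\mathfrak M$ is the greatest $S$-filter on $L$, and maximality forces $\mathfrak U=\mathfrak M$ for any $S$-ultrafilter $\mathfrak U$. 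The proposition is then immediate: $S\cup T\in\mathfrak U(k)$ means $S\cup T\neq\emptyset$, hence $S\neq\emptyset$ or $T\neq\emptyset$, i.e.\ $S\in\mathfrak U(k)$ or $T\in\mathfrak U(k)$. Note that this argument is genuinely different from the categorical one of Section~3 and cannot be read back into it: it is the bottom element $0$, absent in a general category, that collapses the problem (and incidentally shows that the locale $L$ carries exactly one $S$-ultrafilter).
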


\begin{proof}
If the affirmation is false, there exist sieves $ S,T$ on $k$ that do not belong to $ \mathfrak{U}(k)$, but $S\cup T \in \mathfrak{U}(k)$.
Consider a function  $\mathfrak{T}:L\rightarrow Sets$  defined by
$
\mathfrak{T}(k)=\{ R\in \,\ Sieve(k)\mid R\cup S \in \mathfrak{U}(k)\}.
$

Let us verify that $\mathfrak{T}$ is a $S$-filter on a  $L$: in fact, for any object  $k$  of  $L$, we have 
\begin{enumerate}
\item [($F_1$)] if $R^{'} \in \mathfrak T(k)$ then $R^{'} \cup S\in \mathfrak{U}(k)$; and if  $ R^{''}$ is a sieve on $k$ such that $ R{'}\subseteq R{''}$, then  $ R^{''}\cup S \in \mathfrak U(k)$. Consequently $R^{''} \in \mathfrak T(k)$.
\item [($F_2$)] We must show that every finite intersection of sieves of $\mathfrak T(k)$ belongs to $\mathfrak T(k)$; indeed,
let $ (R_i)_{i=1,\cdots,n}$  be a finite collection of sieves on $k$ such that 
$
R_i\cup S\in \mathfrak{U}(k),\,\ \text{for all}\,\  i = 1. . . n,
$
then
$
(R_1\cup S)\cap (R_2\cup S)\cap \cdots \cap (R_n\cup S) = \left(\bigcap_{i=1}^{n}R_i\right)\cup S\in \mathfrak{U}(k).
$
which is equivalent to saying that 
$
\left(\bigcap_{i=1}^{n}R_i\right) \in \mathfrak T(k).
$
\item[($F_3$)] If $R^{'} \in \mathfrak T(k)$ then $R^{'} \cup S\in \mathfrak{U}(k)$; and $h^{*}(R^{'} \cup S)\in \mathfrak T(n)$ for any arrow  $h: n\rightarrow k$; in other words, $h^{*}(R^{'}) \cup h^{*}( S)\in \mathfrak T(n)$, therefore $h^{*}(R^{'})\in \mathfrak T(n)$.
\item [($F_4$)] Evidently, the empty sieve is not in $\mathfrak T(k)$.
 \end{enumerate}
Therefore $\mathfrak{T}$ is a $S$-filter finer than $\mathfrak{U}$, since $T\in \mathfrak T(k)$; but this contradicts the hypothesis than 
$\mathfrak{U}$ is an $S$-ultrafilter.
\end{proof}
\begin{coro}
If the union  of a finite sequence $(S_i)_{i=1,\cdots,n}$ of sieves on $k$ belongs to the image, $\mathfrak{U}(k)$,  of an object $k$ under  an ultra$S$-filter $\mathfrak{U}$, then at least one of the $S_i$ belongs to $\mathfrak{U}(k)$.
\end{coro}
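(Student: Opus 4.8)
The plan is to prove the statement by induction on $n$, using the immediately preceding proposition (the binary case for two sieves) as the engine of the inductive step. This mirrors exactly the argument used for the corresponding corollary in the categorical setting earlier in the paper.

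First I would dispose of the base cases. For $n=1$ the union is simply $S_1$, so the hypothesis $S_1 \in \mathfrak{U}(k)$ is already the conclusion; and the case $n=2$ is precisely the statement of the preceding proposition, which guarantees that $S_1 \cup S_2 \in \mathfrak{U}(k)$ forces $S_1 \in \mathfrak{U}(k)$ or $S_2 \in \mathfrak{U}(k)$. For the inductive step I would assume the result for every union of $n-1$ sieves and suppose $\bigcup_{i=1}^{n} S_i \in \mathfrak{U}(k)$. Rewriting this union as $\left(\bigcup_{i=1}^{n-1} S_i\right) \cup S_n$ and applying the binary proposition to the two sieves $\bigcup_{i=1}^{n-1} S_i$ and $S_n$, I obtain that either $\bigcup_{i=1}^{n-1} S_i \in \mathfrak{U}(k)$ or $S_n \in \mathfrak{U}(k)$. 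In the latter case we are finished; in the former, the inductive hypothesis yields an index $i \in \{1,\dots,n-1\}$ with $S_i \in \mathfrak{U}(k)$. Either way at least one $S_i$ lies in $\mathfrak{U}(k)$, which completes the induction.

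The argument is entirely routine, so I do not expect a serious obstacle; the only point requiring a moment's care is the tacit fact that the partial union $\bigcup_{i=1}^{n-1} S_i$ is itself a genuine sieve on $k$, which is what permits invoking the binary proposition at each stage. Since a sieve on $k$ is by definition a down-set contained in $\downarrow k$, and an arbitrary union of down-sets is again a down-set contained in $\downarrow k$, this partial union is indeed a sieve on $k$, so the proposition applies and the induction proceeds without difficulty.
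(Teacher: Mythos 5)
Your proof is correct and is exactly the argument the paper intends: the paper's own proof is just the remark ``a simple use of induction on $n$,'' and your write-up fills in that induction, using the preceding binary proposition for the inductive step and correctly noting that a finite union of sieves on $k$ (down-sets in $\downarrow k$) is again a sieve on $k$. No discrepancies to report.
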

\begin{proof}
The proof is a simple use of induction on $n$.
\end{proof}

\subsection{$S$-filters and Grothendieck topologies on locales}
\begin{defi}\ 
\begin{enumerate}
\item If $J_1$ and $J_2$ are Grothendieck topologies on a locale $L$, we say that $J_1 \preceq J_2$ if and only if $J_1(k)\subseteq J_2(k)$ for all $k\in L$.
\item In the same way, if $\mathfrak F_1$ and $\mathfrak F_2$ are $S$-filters on  $L$, we say that  $\mathfrak F_1 \preceq \mathfrak F_2$ if and only if $\mathfrak F_1(k)\subseteq \mathfrak F_2(k)$ for all objects $k\in L$.
\end{enumerate}
 \end{defi}
 It is easy to verify that this definition produces two order relations on Grothendieck topologies and $S$-filters respectively.
  
In this way, we have the following facts:
\begin{lemma}\label{F-T-L}
 Every $S$-filter on a locale  is a Grothen\-dieck topology on the same locale.
\end{lemma}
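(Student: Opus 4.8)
The plan is to verify, directly from the four $S$-filter axioms, the three defining conditions of a Grothendieck topology on $L$, exactly in the spirit of Lemma \ref{F-T}. Fix an $S$-filter $\mathfrak F$ and an object $k\in L$; we must check conditions (i)--(iii) of the Grothendieck-topology definition for the assignment $k\mapsto \mathfrak F(k)$.

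The first two axioms are immediate. For the maximal-sieve axiom (i), choose any $S\in\mathfrak F(k)$ (the collection is nonempty; in any case the empty intersection in $(F_2)$ returns $\downarrow k$); since every sieve on $k$ is contained in $\downarrow k$ we have $S\subseteq\downarrow k$, and $(F_1)$ gives $\downarrow k\in\mathfrak F(k)$. The stability axiom (ii) is literally the content of $(F_3)$: if $S\in\mathfrak F(k)$ and $m\leqslant k$ then $S\cap(\downarrow m)\in\mathfrak F(m)$. So nothing beyond unwinding definitions is required for (i) and (ii).

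The transitivity axiom (iii) is the crux, and the step I expect to be the main obstacle. Here we are given $S\in\mathfrak F(k)$ together with a sieve $R$ on $k$ satisfying $R\cap(\downarrow m)\in\mathfrak F(m)$ for every $m\in S$, and we must conclude $R\in\mathfrak F(k)$. The natural route is to prove $R\cap S\in\mathfrak F(k)$ and then invoke $(F_1)$, since $R\cap S\subseteq R$. To approach $R\cap S\in\mathfrak F(k)$ one restricts to each $m\in S$ and combines the hypotheses: $S\in\mathfrak F(k)$ together with $(F_3)$ gives $S\cap(\downarrow m)\in\mathfrak F(m)$, the hypothesis gives $R\cap(\downarrow m)\in\mathfrak F(m)$, and $(F_2)$ then yields $(R\cap S)\cap(\downarrow m)=(R\cap\downarrow m)\cap(S\cap\downarrow m)\in\mathfrak F(m)$ for each $m\in S$.

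The genuine difficulty is the final reassembly: passing from the local memberships $(R\cap S)\cap(\downarrow m)\in\mathfrak F(m)$, one for each $m\in S$, back to the global membership $R\cap S\in\mathfrak F(k)$. This is itself a descent statement, and it is precisely what the four $S$-filter axioms do not obviously supply, since $(F_1)$--$(F_4)$ propagate information only downward along restriction and never upward from the fibres over the $m\in S$ to the fibre over $k$. I would therefore concentrate the effort here, expecting that this step needs either an additional hypothesis on $\mathfrak F$ or a careful argument that exploits the global sieve $S$ itself; the one transparent case is $S=\downarrow k$ (that is, $k\in S$), where the instance $m=k$ already forces $R=R\cap(\downarrow k)\in\mathfrak F(k)$. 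This local-to-global passage is where the analogy with the categorical statement of Lemma \ref{F-T} is most delicate, and it is the place where the proof must be examined with greatest care.
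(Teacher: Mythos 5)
Your verification of axioms (i) and (ii) is fine, and your diagnosis of the transitivity axiom (iii) is exactly right --- but the conclusion is stronger than you suggest: the descent step you isolate is not merely unavailable from $(F_1)$--$(F_4)$, it is false, so the lemma (with the paper's definitions read literally) admits a counterexample. Let $L$ be the four-element frame $\{0,x,y,1\}$ with $0<x<1$, $0<y<1$ and $x,y$ incomparable, and put
\[
\mathfrak F(1)=\bigl\{\{0,x,y\},\,L\bigr\},\qquad
\mathfrak F(x)=\bigl\{\{0\},\,\downarrow x\bigr\},\qquad
\mathfrak F(y)=\bigl\{\{0\},\,\downarrow y\bigr\},\qquad
\mathfrak F(0)=\bigl\{\{0\}\bigr\}.
\]
Each value is upward closed among sieves, closed under finite intersections, and omits the empty sieve; and $(F_3)$ holds because $\mathfrak F(x)$, $\mathfrak F(y)$, $\mathfrak F(0)$ contain \emph{every} nonempty sieve on $x$, $y$, $0$, while the restrictions of $\{0,x,y\}$ and $L$ to $x$, $y$, $0$ are $\downarrow x$, $\downarrow y$, $\{0\}$. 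So $\mathfrak F$ is an $S$-filter. Now take $S=\{0,x,y\}\in\mathfrak F(1)$ and $R=\{0\}$, a sieve on $1$: for every $m\in S$ one has $R\cap(\downarrow m)=\{0\}\in\mathfrak F(m)$, yet $R\notin\mathfrak F(1)$. Transitivity fails, so this $S$-filter is not a Grothendieck topology; and your intended reassembly could never have gone through, since $R\cap S=\{0\}\notin\mathfrak F(1)$ either.

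This also explains the friction you felt with Lemma \ref{F-T}. The paper offers no proof of Lemma \ref{F-T-L} at all; it implicitly defers to the proof of Lemma \ref{F-T}, and that proof commits precisely the leap you declined to make: having shown $h^{*}(S\cap R)=h^{*}(S)\cap h^{*}(R)\in\mathfrak F(D)$ for every $h\in S$, it asserts ``consequently $S\cap R \in \mathfrak F(C)$'' --- an inference supported by no filter axiom, being exactly the transitivity (descent) property one is trying to prove. Read as a posetal category, the example above refutes that argument as well. Your remark that the case $k\in S$ (i.e.\ $S=\downarrow k$) is transparent identifies the only cheap repair; beyond that the statement needs genuinely more, e.g.\ restricting to filters all of whose members contain $k$, or adding a descent axiom to the definition of $S$-filter. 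In short, your proposal stops exactly where any correct argument must stop, because the statement as given is not provable.
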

\begin{lemma}
  Let $\mathfrak F$ be a $S$-filter on a locale $L$ and let $J$ be a Grothen\-dieck topology on the same locale. If $J\preceq \mathfrak F$ then $J$ is a $S$-filter.
  \end{lemma}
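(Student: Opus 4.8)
The plan is to check directly that $J$ satisfies the four defining conditions of an $S$-filter, exactly paralleling the proof of the preceding category-level lemma. The guiding observation is that conditions $(F_1)$ and $(F_3)$ are already built into the axioms of a Grothendieck topology, so they hold for $J$ with no reference to $\mathfrak F$; the hypothesis $J\preceq\mathfrak F$ is needed only to secure the two nonemptiness clauses, namely $(F_4)$ and the fact that the finite intersections appearing in $(F_2)$ are nonempty.

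First I would dispose of $(F_1)$. Any Grothendieck topology is upward closed: if $S\in J(k)$ and $S\subseteq R$ with $R$ a sieve on $k$, then for every $m\in S$ we have $\downarrow m\subseteq R$ (since $R$ is a down-set containing $m$), so $R\cap(\downarrow m)=\downarrow m\in J(m)$ by the maximality axiom (i); the transitivity axiom (iii), applied to $S$ and $R$, then yields $R\in J(k)$. Condition $(F_3)$ is immediate from the stability axiom (ii): for $S\in J(k)$ and $m\leqslant k$ we get $S\cap(\downarrow m)\in J(m)$, and since $J\preceq\mathfrak F$ this sieve lies in $\mathfrak F(m)$ as well, so the clause holds whichever codomain one reads in its statement.

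For $(F_2)$ I would first establish that $J$ is closed under finite intersections, treating two sieves $R,S\in J(k)$ and then inducting. For each $m\in S$ one has $(R\cap S)\cap(\downarrow m)=R\cap(\downarrow m)$, because $m\in S$ forces $S\cap(\downarrow m)=\downarrow m$; and $R\cap(\downarrow m)\in J(m)$ by stability (ii), using that $m\in S$ gives $m\leqslant k$. Hence transitivity (iii), applied to $S\in J(k)$ and the sieve $R\cap S$, gives $R\cap S\in J(k)$. With closure under intersection in hand, the inclusion $J\preceq\mathfrak F$ places every such finite intersection in $\mathfrak F(k)$, and axiom $(F_4)$ for the $S$-filter $\mathfrak F$ guarantees it is not the empty sieve. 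Finally $(F_4)$ itself follows the same way: since $J(k)\subseteq\mathfrak F(k)$ and the empty sieve is excluded from $\mathfrak F(k)$, it is excluded from $J(k)$.

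The only step carrying genuine content is the closure of $J$ under finite intersections inside $(F_2)$; this is the standard fact that the covering sieves of a site form a filtered family, proved by the interplay of the stability and transitivity axioms exhibited above. Every other step is either a direct reading of a topology axiom or an appeal to $J\preceq\mathfrak F$ together with $(F_4)$ for $\mathfrak F$, so I expect no real obstacle beyond that single computation.
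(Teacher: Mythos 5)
Your proof is correct and follows essentially the same route as the paper's: both verify the four $S$-filter axioms directly, with $(F_1)$ and closure under finite intersections coming from the Grothendieck topology axioms alone, and the hypothesis $J\preceq\mathfrak F$ invoked only to rule out the empty sieve (for $(F_4)$ and the nonemptiness of the intersections in $(F_2)$). The only difference is one of detail: the paper simply asserts upward closure and intersection closure of $J$ as known facts, whereas you derive them explicitly from the maximality, stability and transitivity axioms, which strengthens rather than changes the argument.
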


 \begin{proof}\
  \begin{enumerate}
  \item [($F_1$)] Given any  $k\in L$, it is clear that if $S$ is a sieve in $J(k)$ and $R$ is a sieve on $k$ such that $ S \subseteq R$, then  $ R \in J(k)$;
\item [($F_2$)] let  $ (R_i)_{i=1,\cdots,n}$  be a finite collection of sieves on $J(k)$, then $\bigcap_{i=1}^{n} R_i \in J(k)$, and therefore $\bigcap_{i=1}^{n} R_i \in \mathfrak F(k)$ (and consequently is not empty);
\item [($F_3$)] If $S\in J(k)$, and $m\leqslant k$ then certainly \,\,\ $S\cap (\downarrow m)$ is in $\mathfrak F(m)$;
\item[($F_4$)] the empty sieve is neither in $\mathfrak F(k)$ nor in $J(k)$.
\end{enumerate}
 \end{proof}

 \subsection{Systems of Neighborhoods on Locales}
 
 The basic ideas used in in this section can be found in  P.T.  Jhonstone  book  ``Stone Spaces''\linebreak({\cite{PJ2} pages 41-42) or in the S. MacLane and  I. Moerdijk book ``Sheaves in Geometry and Logic'' (see \cite{MM} pages 470-473):  A point of a topological space $X$ is the same thing as a continuous map $1 \rightarrow X$, where $1$ is the one-point space, it seems reasonable to define a point of a locale $L$ to be a continuous map  $\Omega(1) = 2\rightarrow L$, i.e. a frame homomorphism $p: L\rightarrow 2$.  such a map is completely determined by its kernel
$p^{-1} (0)$ or its dual kernel $p^{-1}(1)$, which are   respectively a prime ideal and a prime filter of $L$. 

In what follows, we are going to use the frame homomorphisms\linebreak
$p: L\rightarrow 2$ as  points of the locale $L$. 
\begin{defi}
Let  $( L, J)$ be a locale equipped with a Grothen\-dieck topology, let $k\in L$ and let $p$ be a point of $L$ such that $k\in p^{-1}(0)$. A sieve $V$ in $J(k)$,  is said to be a  $\mathfrak{G}$-neighborhood  of $p$ if $V\subseteq p^{-1}(0)$.

\end{defi}

\begin{defi}
Let  $(L, J)$ be a locale equipped with a Grothendieck topology. A cover-neighborhood of $(L, J)$ is a function $\mathcal N$ which assigns to each object $(k, J(k))$ of $(L, J)$ and to each point $p$ of $L$, for which $k\in p^{-1}(0)$, a collection 
$
\mathcal N_{\scriptstyle p}(k)\,\ \text{of sieves of}\,\ k
$
such that each sieve in
 $\mathcal N_{\scriptstyle p}(k)$ contains a $\mathfrak{G}$-neighborhood of $p$.
\end{defi}

\begin{prop}\label{neighbor}
Let $L$ be  a  locale, and let $k\in L$. Then the pair $(k,\mathscr N_p(k))$, where  $\mathscr N_p(k)$ is the collection of all  cover-neighbor\-hoods of a point $p$ (for which  $k\in p^{-1}(0)$)  is  an object of $L$ equipped with cover-neighborhood.
\end{prop}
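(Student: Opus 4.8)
The plan is to mirror, in the localic setting, the verification already carried out for the categorical proposition on $(C,\mathscr N_p(C))$: I would check that $\mathscr N_p(k)$ satisfies the $S$-filter axioms $(F_1)$, $(F_2)$ and $(F_4)$ on the complete lattice of sieves on $k$, so that $(k,\mathscr N_p(k))$ is indeed an object of $L$ equipped with a cover-neighborhood. Throughout I would keep the point $p$ with $k\in p^{-1}(0)$ fixed and recall that, by definition, a $\mathfrak G$-neighborhood of $p$ is a covering sieve $V\in J(k)$ with $V\subseteq p^{-1}(0)$, and that a sieve lies in $\mathscr N_p(k)$ exactly when it contains such a $V$.

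For $(F_1)$ the argument is immediate: if $S\in\mathscr N_p(k)$ then some $\mathfrak G$-neighborhood $V$ satisfies $V\subseteq S$, so whenever $S\subseteq R$ for a sieve $R$ on $k$ the same $V$ witnesses $R\in\mathscr N_p(k)$.

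The heart of the proof is $(F_2)$. Given $S_1,\dots,S_n\in\mathscr N_p(k)$, I would choose $\mathfrak G$-neighborhoods $V_i\subseteq S_i$ and consider $\bigcap_{i=1}^n V_i\subseteq\bigcap_{i=1}^n S_i$; it then remains to see that $\bigcap_i V_i$ is again a $\mathfrak G$-neighborhood. The inclusion $\bigcap_i V_i\subseteq V_1\subseteq p^{-1}(0)$ is clear, so the real content is that $J(k)$ is closed under finite intersections. This I would reduce to the case of two sieves $V,W\in J(k)$: for every $m\in V$ one has $\downarrow m\subseteq V$ because $V$ is a down-set, whence $(V\cap W)\cap(\downarrow m)=W\cap(\downarrow m)$, which lies in $J(m)$ by the stability axiom applied to $W$; the transitivity axiom then forces $V\cap W\in J(k)$. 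Iterating yields $\bigcap_i V_i\in J(k)$, so $\bigcap_i V_i$ is a $\mathfrak G$-neighborhood contained in $\bigcap_i S_i$, and hence $\bigcap_i S_i\in\mathscr N_p(k)$.

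Finally, for $(F_4)$ I would show the empty sieve is not in $\mathscr N_p(k)$; since every member of $\mathscr N_p(k)$ contains a $\mathfrak G$-neighborhood, it suffices that a $\mathfrak G$-neighborhood be a nonempty sieve. This is exactly the localic analogue of the parenthetical remark in the categorical proof, and it is the step I expect to be the main obstacle: unlike the categorical case, where a $\mathfrak G$-neighborhood visibly contains a morphism $\phi$ with $\phi\circ q=p$, here nonemptiness amounts to requiring $\emptyset\notin J(k)$, i.e.\ that $k$ is not covered by the empty sieve. Indeed, if $\emptyset\in J(k)$ then, reading the transitivity axiom over the empty index set, every sieve on $k$ would cover, and the empty sieve would itself qualify as a $\mathfrak G$-neighborhood, collapsing the conclusion. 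I would therefore either assume, as is implicit for the filtered objects considered earlier, that $k$ lies in the dense sub-site of objects not covered by the empty sieve, or restrict to points $p$ for which $J(k)$ meets $p^{-1}(0)$ only in nonempty sieves; with this understood, $\emptyset\notin\mathscr N_p(k)$ follows, and the three axioms together give the claim.
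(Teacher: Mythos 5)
Your proof is correct and follows the same outline as the paper's own proof of this proposition: upward closure, closure under finite intersections, and exclusion of the empty sieve. But you supply two things the paper's text does not, and both are worth recording. For the intersection axiom, the paper simply intersects the chosen $\mathfrak{G}$-neighborhoods $V_i$ and asserts $\bigcap_{i=1}^n S_i\in\mathscr N_p(k)$, tacitly assuming that $\bigcap_i V_i$ is again a covering sieve; your derivation of $V\cap W\in J(k)$ from the stability and transitivity axioms (using that $\downarrow m\subseteq V$ for each $m\in V$) is exactly the missing justification, and it is correct. More importantly, your hesitation over the empty sieve is warranted: the paper dismisses this step with the parenthetical remark ``each sieve contains a point'', which is imported from the categorical proposition, where a $\mathfrak{G}$-neighborhood contains by definition a morphism $\phi$ with $\phi\circ q=p$ and is therefore nonempty. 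In the localic definition, a $\mathfrak{G}$-neighborhood of $p$ is merely a sieve $V\in J(k)$ with $V\subseteq p^{-1}(0)$; since $p^{-1}(0)$ is an ideal containing $k$, every sieve on $k$ satisfies that containment, so the $\mathfrak{G}$-neighborhoods of $p$ at $k$ are exactly the covering sieves, and the empty sieve is one of them precisely when $\emptyset\in J(k)$ (as happens, for instance, for the discrete topology). In that case $\mathscr N_p(k)$ consists of all sieves on $k$ and the axiom $(F_4)$ fails, so a hypothesis of the kind you impose ($\emptyset\notin J(k)$, i.e.\ $k$ lies in the dense sub-site of objects not covered by the empty sieve, as in the remark of Section 3) is genuinely needed. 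In short, the gap you flagged is in the paper's proof, not in yours.
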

\begin{proof}\
\begin{enumerate}

\item [(i)] If $S\in\mathscr N_p(k)$ and $R$ is a sieve on $k$ such that $S \subseteq R$, then  $R\in\mathscr N_p(k)$, because there is a $\mathfrak{G}$-neighborhood $V$ of $p_{\scriptscriptstyle C}$ such that $V \subseteq S\subseteq R$;
\item[(ii)] let $\{S_1, S_2,\cdots,S_n\}$ be a finite col\-lec\-tion of sie\-ves of $\mathscr N_p(k)$, then there exists a col\-lec\-tion    $\{V_1, V_2,\cdots, V_n\}$  of $\mathfrak{G}$-neigh\-bor\-hood of $p_{\scriptscriptstyle C}$ such that $V_i\subseteq S_i$ for $I=1,2,\cdots n$, therefore\linebreak
$\bigcap_{i=1}^n V_n \subseteq\bigcap_{i=1}^n S_n $  and $\bigcap_{i=1}^n S_n\in\mathscr N_p(Ck)$;
\item[(iii)] the empty sieve is not in $\mathscr N_p(k)$ (each  sieve contains a point).
\end{enumerate}
\end{proof}

In this case, we say that the point $p$ of $l$ is a {\bf limit point} of $\mathscr N_p(k)$. 

\begin{defi}
Let  $(L, J)$ be a  locale equipped  with a Grothendieck topology; let  $\mathfrak F$ be a $S$-filter on  $L$ and let $k\in L$. 
\begin{enumerate}
\item We shall say that $\mathfrak F(k) $ {\bf converges} to a point $p$ of $L$, for which $k\in p^{-1}(0)$, if $\mathscr N_p(k)\subseteq \mathfrak F(k) $.
\item The closure of a sieve $A$ on $k$ is the collection of all points $p$ of $L$ (satisfying  $k\in p^{-1}(0)$) such that every cover-neighborhood of $p$ meets $A$.
\item A point $p$  is a cluster point of $\mathfrak B(k)$ -the image under the $S$-filter base $\mathfrak B$ of $k$- if it lies in the closure of all the sieves on $\mathfrak B(k)$.
\item A point $p$ of $C$ is a cluster point of $\mathfrak F(C)$ -the image under the $S$-filter  $\mathfrak F$ of $C$- if it lies in the closure  of all the sieves on $\mathfrak F(C)$.
\end{enumerate}
\end{defi}

\begin{prop}\label{converges-L}
Let  $(L, J)$ be a  locale equipped with a Grothendieck topology; let  $\mathfrak F$ be a $S$-filter on  $L$ and let $k\in L$. The point $p$ of $L$ is a cluster point of  $\mathfrak F(k)$ if and only if there exists a $S$-filter  $\mathscr G$ finer than  $\mathfrak F$ such that $\mathscr G(k) $ {\bf converges} to $p$.
\end{prop}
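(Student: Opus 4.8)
The plan is to mirror the proof of the categorical counterpart, constructing an explicit $S$-filter base at $k$ from the pairwise intersections of sieves of $\mathfrak F(k)$ with $\mathfrak G$-neighborhoods of $p$. For the forward implication I would assume $p$ is a cluster point of $\mathfrak F(k)$, so by the definition of closure on locales $p$ lies in the closure of every $A\in\mathfrak F(k)$; hence every cover-neighborhood, and in particular every $\mathfrak G$-neighborhood $V$ of $p$, meets $A$. This yields $A\cap V\neq\emptyset$ for all $A\in\mathfrak F(k)$ and all $\mathfrak G$-neighborhoods $V$. I would then set
\[
\mathscr B(k)=\{\,A\cap V\mid A\in\mathfrak F(k),\ V\ \text{a }\mathfrak G\text{-neighborhood of }p\,\}
\]
and check the three base axioms. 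For $(B_1)$ the identity $(A\cap V)\cap(A'\cap V')=(A\cap A')\cap(V\cap V')$ reduces the claim to two facts: $A\cap A'\in\mathfrak F(k)$ by $(F_2)$, and $V\cap V'$ is again a $\mathfrak G$-neighborhood of $p$, being a sieve of $J(k)$ contained in $p^{-1}(0)$, where I use that $J(k)$ is closed under finite intersection and that $V,V'\subseteq p^{-1}(0)$. Axiom $(B_3)$ is immediate: $\mathscr B(k)$ is nonempty and, by the cluster-point hypothesis, contains no empty sieve.

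Next I would let $\mathscr G$ be the $S$-filter generated by $\mathscr B$, invoking the generation proposition for $S$-filter bases. Since each $A\in\mathfrak F(k)$ contains some $A\cap V\in\mathscr B(k)$, we obtain $\mathfrak F\preceq\mathscr G$; and since every $\mathfrak G$-neighborhood $V$ likewise contains some $A\cap V\in\mathscr B(k)$, every $\mathfrak G$-neighborhood, and hence every cover-neighborhood containing it, lies in $\mathscr G(k)$. Thus $\mathscr N_p(k)\subseteq\mathscr G(k)$, i.e. $\mathscr G(k)$ converges to $p$.

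For the converse, given an $S$-filter $\mathscr G\succeq\mathfrak F$ with $\mathscr G(k)$ converging to $p$, any $A\in\mathfrak F(k)\subseteq\mathscr G(k)$ and any $\mathfrak G$-neighborhood $V\in\mathscr N_p(k)\subseteq\mathscr G(k)$ both lie in $\mathscr G(k)$, so $A\cap V\in\mathscr G(k)$ by $(F_2)$ and therefore $A\cap V\neq\emptyset$ by $(F_4)$. Hence every $\mathfrak G$-neighborhood of $p$ meets every $A\in\mathfrak F(k)$, placing $p$ in the closure of each such $A$, which is precisely the assertion that $p$ is a cluster point of $\mathfrak F(k)$.

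The step I expect to cause the most trouble is verifying the stability axiom $(B_2)$ for $\mathscr B$: a base of an $S$-filter is a \emph{global} assignment $m\mapsto\mathscr B(m)$, so one must check that for $m\leqslant k$ the sieve $(A\cap V)\cap(\downarrow m)$ again belongs to a base collection at $m$. This forces me to extend $\mathscr B$ coherently to all $m\in L$ (for instance by restricting along $m\leqslant k$ and taking $\mathscr B(m)=\mathfrak F(m)$ elsewhere) and to use that both $\mathfrak F$ and the defining condition $V\subseteq p^{-1}(0)$ of a $\mathfrak G$-neighborhood are preserved under intersecting with $\downarrow m$. All remaining verifications are the direct lattice-theoretic analogues of the categorical case and should go through routinely.
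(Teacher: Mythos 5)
Your proposal is correct and takes essentially the same route as the paper's own proof: the same base of intersections $A\cap V$, the same generated finer $S$-filter $\mathscr G$, and the same $(F_2)$/$(F_4)$ argument for the converse. You are in fact more careful than the paper, whose verification of $(B_1)$ contains typos (it writes $A\cup W$ and the false identity $(A\cap V)\cap(A\cup W)=A\cup(V\cap W)$, where your $(A\cap V)\cap(A'\cap V')=(A\cap A')\cap(V\cap V')$ is what is meant) and which never addresses the stability axiom $(B_2)$, i.e.\ the globalization issue you rightly single out as the delicate step. The only adjustment needed is to your parenthetical fix: setting $\mathscr B(m)=\mathfrak F(m)$ for $m\not\leqslant k$ breaks stability, since for $S\in\mathfrak F(m)$ and $n\leqslant m$ with $n\leqslant k$ the sieve $S\cap(\downarrow n)$ need not be of the form $(A\cap V)\cap(\downarrow n)$; a uniform choice such as $\mathscr B(m)=\{\,T\cap V\cap(\downarrow m)\mid T\in\mathfrak F(m),\ V\ \text{a }\mathfrak{G}\text{-neighborhood of }p\,\}$ for every $m\in L$ does work: it satisfies $(B_1)$--$(B_3)$ (nonemptiness because, under the cluster hypothesis, $T$ and $V$ are nonempty down-sets and hence contain $0$), the generated $S$-filter is finer than $\mathfrak F$ at every $m$ (take $V=\downarrow k$, which is a $\mathfrak{G}$-neighborhood because $p^{-1}(0)$ is a down-set containing $k$), and it converges to $p$ at $k$ (take $T=\downarrow k$).
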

\begin{proof}
Let us begin by assuming that the point $p$ of $L$ is a cluster point of  $\mathfrak F(k)$; from definition \ref{converges}, it follows that for each sieve $A$ in $\mathfrak F(k)$, every $\mathfrak{G}$-neighborhood $V$ of $p$ meets $A$. We need to show that the collection 
$
\mathscr B(k) =\{A\cap V\mid V \,\ \text{is a $\mathfrak{G}$-neighborhood of}\,\ p\}
$
define  a base for an $S$-filter  $\mathscr G$ finer than $\mathfrak F$.in such a way that  $\mathscr G(C) $ {\bf converges} to $p$.

Indeed,
\begin{enumerate}
\item[($B_1)$] Let $A\cap V$,\,\ $A\cup W$ two elements of the collection $\mathscr B(k)$, since 
$(A\cap V) \cap (A\cup W)= A\cup (V\cap W) $
and $V\cap W$ is a $\mathfrak{G}$-neighborhood of $p$, there exists $U$,  a $\mathfrak{G}$-neighborhood of $p$ such that
$U\subseteq V\cap W,$ and clearly $A\cap U \in \mathscr B(k)$;
\item[$(B_2)$] Obviously $\mathscr B(k)$ is not empty, and the empty sieve is not in $\mathscr B(C)$.
\end{enumerate}

Now, if $\mathscr G$ is the $S$-filter generated by $\mathscr B$ then  $\mathscr G$ is finer than  $\mathfrak F$, and $\mathscr G(C)$ naturally converges to $p$.

Conversely, if there is a $S$-filter  $\mathscr G$ finer than  $\mathfrak F$ such that $\mathscr G(k) $ {\bf converges} to $p$ then each sieve $R$ in $\mathfrak F(k)$ and each $\mathfrak{G}$-neighborhood $U$ of $p$ belongs to  $\mathscr G$ and hence meet, so the point $p$ of $L$ is a cluster point of  $\mathfrak F(C)$.
\end{proof}
\begin{prop}\label{closure-L}
Let  $(L, J)$ be a  locale equipped with a Grothendieck topology; let  $k\in L$  and let $A$ be a sieve on  $k$. The point $p$ of $L$ lies in the closure  of $A$ if and only if there is a $S$-filter  $\mathscr G $ such that  $A\in\mathscr G(k) $ and $\mathscr G(k) $ {\bf converges} to $p$.
\end{prop}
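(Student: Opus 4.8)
The plan is to imitate, step for step, the proof of the corresponding closure statement for categories in Section 5, translating each move into the order-theoretic language of locales and exploiting the one structural fact peculiar to this setting: a point $p\colon L\to 2$ has kernel $p^{-1}(0)$ which is a prime ideal, hence in particular a down-set. I would prove the two implications separately, reusing the $S$-filter axioms $(F_1)$--$(F_4)$ and the base axioms $(B_1)$--$(B_3)$, together with the convergence condition $\mathscr N_p(k)\subseteq\mathscr G(k)$ and Proposition \ref{converges-L}.

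For the forward implication I would assume that $p$ lies in the closure of $A$. By the defining property of the closure every cover-neighborhood of $p$ meets $A$, and since any $\mathfrak G$-neighborhood $V$ of $p$ is itself a cover-neighborhood (it contains itself, and so lies in $\mathscr N_p(k)$ by Proposition \ref{neighbor}), this says precisely that $A\cap V\neq\emptyset$ for every $\mathfrak G$-neighborhood $V$ of $p$. I would then put $\mathscr B(k)=\{A\cap V\mid V\text{ is a }\mathfrak G\text{-neighborhood of }p\}$ and verify that $\mathscr B$ is a base of an $S$-filter. Axiom $(B_3)$ is immediate from the last remark. For $(B_1)$, given $A\cap V$ and $A\cap W$ in $\mathscr B(k)$ one has $(A\cap V)\cap(A\cap W)=A\cap(V\cap W)$, and $V\cap W$ is again a $\mathfrak G$-neighborhood of $p$ because $J(k)$ is closed under finite intersections and $V\cap W\subseteq p^{-1}(0)$; thus $A\cap(V\cap W)\in\mathscr B(k)$ sits inside both. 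For the stability axiom $(B_2)$ I would restrict to $m\leqslant k$: since $p$ is a frame homomorphism and $m\leqslant k\in p^{-1}(0)$ we get $p(m)\leqslant p(k)=0$, so $m\in p^{-1}(0)$; by the stability axiom of $J$ the sieve $V\cap(\downarrow m)$ lies in $J(m)$ and is contained in $p^{-1}(0)$, hence is a $\mathfrak G$-neighborhood of $p$ at $m$, and $(A\cap V)\cap(\downarrow m)=(A\cap\downarrow m)\cap(V\cap\downarrow m)$ lies in the correspondingly restricted base at $m$.

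Let $\mathscr G$ be the $S$-filter generated by $\mathscr B$. Then $A\in\mathscr G(k)$ because $A\cap V\subseteq A$, and $\mathscr G(k)$ converges to $p$ because every cover-neighborhood $S\in\mathscr N_p(k)$ contains some $\mathfrak G$-neighborhood $V$, so that $A\cap V\subseteq V\subseteq S$ forces $S\in\mathscr G(k)$; hence $\mathscr N_p(k)\subseteq\mathscr G(k)$. For the converse I would argue directly: assuming $A\in\mathscr G(k)$ and $\mathscr N_p(k)\subseteq\mathscr G(k)$, any $\mathfrak G$-neighborhood $V$ of $p$ belongs to $\mathscr N_p(k)$, hence to $\mathscr G(k)$, so by $(F_2)$ we have $A\cap V\in\mathscr G(k)$ and by $(F_4)$ this sieve is nonempty; thus $V$ meets $A$, and $p$ lies in the closure of $A$. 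Alternatively, one invokes Proposition \ref{converges-L}, by which convergence of the finer filter $\mathscr G$ makes $p$ a cluster point of $\mathscr G(k)$, and a cluster point lies in the closure of every sieve of $\mathscr G(k)$, in particular of $A$.

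I expect the only genuinely delicate point to be the stability base axiom $(B_2)$, which forces one to replace $A$ by its restriction $A\cap\downarrow m$ below each $m\leqslant k$ and to know that the restricted sieves $V\cap\downarrow m$ remain legitimate $\mathfrak G$-neighborhoods; this is exactly where the prime-ideal (down-set) structure of $p^{-1}(0)$ and the stability axiom of the Grothendieck topology carry the argument. Everything else is formal and parallels the categorical case.
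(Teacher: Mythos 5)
Your proposal is correct and takes essentially the same route as the paper's own proof: the forward implication constructs the base $\mathscr B(k)=\{A\cap V\mid V \mbox{ is a } \mathfrak{G}\mbox{-neighborhood of } p\}$ and passes to the $S$-filter it generates, while your direct $(F_2)$/$(F_4)$ argument for the converse is just the paper's cluster-point argument (Proposition \ref{converges-L}) unwound, which you also note as an alternative. If anything, you supply details the paper omits, notably the verification of the stability axiom $(B_2)$ via monotonicity of the frame homomorphism $p$ and the stability axiom of $J$.
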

\begin{proof}
Let us begin by assuming that The point $p$ of $L$ lies in the closure  of $A$; from definition \ref{converges}, it follows that  every $\mathfrak{G}$-neighborhood $V$ of $p$ meets $A$. Then 
$
\mathscr B(k) =\{A\cap V\mid V \,\ \text{is a $\mathfrak{G}$-neighborhood of}\,\ p\}
$
is a base  for a $S$-filter  $\mathscr G$, in such a way that  $\mathscr G(k) $ {\bf converges} to $p$.

Conversely, if  $A \in \mathscr G(k)$ and  $\mathscr G(C) $ {\bf converges} to $p$ then  $p$ is a cluster point  of $\mathscr G(k) $ and hence $p$ lies in the closure  of $A$.
\end{proof}
 
\subsection{Compactness on Locales}
In this section we consider a concept of compactness on locales entirely different from the one described by P. J Johnstone in \cite{PJ1}.
\begin{defi}\label{l-compact}
.
 We shall say that an object $k\in L$ 
 
 \begin{itemize}
 \item Is {\bf  quasi-compact} if, for every $S$-filter $\mathfrak F$  on  $L$,  $\mathfrak F(k)$ has at least one cluster point.
 \item Is  {\bf Hausdorff} if, for every $S$-filter $\mathfrak F$  on  $L$,  $\mathfrak F(k)$ has no more that one limit point. 
 \item Is  {\bf compact} if, for every $S$-filter $\mathfrak F$  on  $L$,  $\mathfrak F(k)$ is quasi-compact and Hausdorff.
 \end{itemize}
\end{defi}
\begin{lemma}\label{ul-comp}
Let  $(L, J)$ be a  locale equipped with a Grothendieck topology. An object $k\in L$      is compact if and only if, for every $S$-ultrafilter $\mathfrak U$  on  $L$,  $\mathfrak U(k)$ is convergent.
\end{lemma}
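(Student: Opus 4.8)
The plan is to imitate the proof of Lemma \ref{u-comp}, transporting it from finitely complete categories to locales; the two engines are Proposition \ref{L-finer} (every $S$-filter is dominated by an $S$-ultrafilter) and Proposition \ref{converges-L} (a point $p$ is a cluster point of $\mathfrak F(k)$ exactly when some $S$-filter finer than $\mathfrak F$ converges to $p$).

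For the implication ($\Leftarrow$) I would assume that $\mathfrak U(k)$ is convergent for every $S$-ultrafilter $\mathfrak U$, and take an arbitrary $S$-filter $\mathfrak F$ on $L$. By Proposition \ref{L-finer} I can choose an $S$-ultrafilter $\mathfrak U$ finer than $\mathfrak F$; by hypothesis $\mathfrak U(k)$ converges to some point $p$, so $\mathscr N_p(k)\subseteq\mathfrak U(k)$. Since $\mathfrak U$ is finer than $\mathfrak F$ and converges to $p$, Proposition \ref{converges-L} gives that $p$ is a cluster point of $\mathfrak F(k)$. As $\mathfrak F$ is arbitrary, every $S$-filter has a cluster point at $k$, which is precisely quasi-compactness in the sense of Definition \ref{l-compact}.

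For the implication ($\Rightarrow$) I would assume $k$ compact and fix an $S$-ultrafilter $\mathfrak U$. Being in particular an $S$-filter, $\mathfrak U(k)$ has a cluster point $p$ by quasi-compactness, and Proposition \ref{converges-L} supplies an $S$-filter $\mathscr G$ finer than $\mathfrak U$ with $\mathscr G(k)$ converging to $p$. Because $\mathfrak U$ is maximal among $S$-filters, I conclude $\mathscr G=\mathfrak U$, so $\mathfrak U(k)$ itself converges to $p$ and is therefore convergent.

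The delicate step is this appeal to maximality in the forward direction: one must be sure that the finer $S$-filter produced by Proposition \ref{converges-L} cannot properly refine $\mathfrak U$, which is exactly the defining property of an $S$-ultrafilter, and that its base $\mathscr B(k)=\{A\cap V\}$ really satisfies axiom $(F_4)$ (no finite intersection $A\cap V$ being the empty sieve), which is where the finite-intersection property carried by $\mathfrak U$ must be invoked. The Hausdorff clause of Definition \ref{l-compact} then makes the limit $p$ unique, so that calling $\mathfrak U(k)$ \emph{convergent} is unambiguous.
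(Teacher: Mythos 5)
Your proposal is correct and takes essentially the same route as the paper's own proof: Proposition \ref{L-finer} combined with Proposition \ref{converges-L} to pass from ultrafilter convergence to a cluster point of an arbitrary $S$-filter, and, for the converse, the fact that an $S$-ultrafilter with a cluster point must converge to it --- which you justify explicitly via maximality of $\mathfrak U$ against the finer filter $\mathscr G$, where the paper invokes this tacitly. The one caveat, which you inherit from the paper rather than introduce, is that in the backward direction both arguments establish only the quasi-compactness half of Definition \ref{l-compact}, leaving the Hausdorff clause unverified.
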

\begin{proof}
First suppose that $\mathfrak F$  is an $S$-filter on  $L$. Proposition \ref{L-finer}
ensures that, for  every $S$-filter, there exists an $S$-ultrafilter $\mathfrak U$  finer than $\mathfrak F$, such that $\mathfrak U(k)$ converges to a point $p$ on $L$, therefore $p$ is a cluster point of $\mathfrak F(k)$.

Conversely, if, for an $S$-ultrafilter $\mathfrak U$, $\mathfrak U(k)$ has a cluster point then it converges to this point.
\end{proof}
\begin{prop}(Tychonoff)
 Let  $(L, J)$ be a  locale equipped with a Grothendieck topology. Every meet  (greatest lower bound) of compact objects in the  category $L$ is compact. 
\end{prop}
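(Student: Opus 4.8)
The plan is to reduce the statement to the ultrafilter criterion of Lemma~\ref{ul-comp} and then mimic, in the localic setting, the argument used for the categorical Tychonoff theorem. Write $k=\bigwedge_{i\in I}k_i$ for the meet (which is precisely the categorical product of the $k_i$ in the poset $L$), and recall that by Lemma~\ref{ul-comp} it suffices to prove that $\mathfrak U(k)$ is convergent for every $S$-ultrafilter $\mathfrak U$ on $L$. So I would fix such an $\mathfrak U$ and aim to produce a point $p$ of $L$ with $k\in p^{-1}(0)$ to which $\mathfrak U(k)$ converges.

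First I would exploit the factors. Since $k\leqslant k_i$ for each $i$, the stability axiom $(F_3)$ for $S$-filters gives a restriction sending each sieve $S\in\mathfrak U(k_i)$ to $S\cap(\downarrow k)\in\mathfrak U(k)$; dually, the stability axiom for the Grothendieck topology $J$ sends a $\mathfrak G$-neighborhood $V_i\in J(k_i)$ of a point to $V_i\cap(\downarrow k)\in J(k)$. Because each $k_i$ is compact, Lemma~\ref{ul-comp} applied to the same $\mathfrak U$ yields a point $p_i$ with $k_i\in p_i^{-1}(0)$ such that $\mathfrak U(k_i)$ converges to $p_i$, i.e. $\mathscr N_{p_i}(k_i)\subseteq\mathfrak U(k_i)$; in particular every $\mathfrak G$-neighborhood of $p_i$ at $k_i$ already belongs to $\mathfrak U(k_i)$.

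Next I would assemble the limit point. A point of $L$ is a frame homomorphism $p\colon L\to 2$, equivalently a prime ideal $p^{-1}(0)$, and I would build $p$ from the family $(p_i)$ so that it is compatible with the order relations $k\leqslant k_i$, the candidate being the point whose prime ideal restricts along each $k_i$ to that of $p_i$. Granting such a $p$, convergence of $\mathfrak U(k)$ to $p$ follows from the filter axioms: by Proposition~\ref{neighbor} the collection $\mathscr N_p(k)$ satisfies the filter axioms, so by upward closure $(F_1)$ of $\mathfrak U(k)$ it is enough to check that every $\mathfrak G$-neighborhood $V$ of $p$ at $k$ lies in $\mathfrak U(k)$. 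Arranging the basic $\mathfrak G$-neighborhoods of $p$ at $k$ to be finite intersections of restrictions $V_i\cap(\downarrow k)$ of factor neighborhoods, each such $V_i\cap(\downarrow k)$ lies in $\mathfrak U(k)$ by $(F_3)$ together with the previous paragraph, their finite intersection stays in $\mathfrak U(k)$ by $(F_2)$, and $V$ then lies in $\mathfrak U(k)$ by $(F_1)$; this is exactly where the finite-support condition (``$S_i$ is the maximal sieve except for finitely many indices'') of the product-basis construction enters, while $(F_4)$ guarantees the empty sieve never appears. This gives $\mathscr N_p(k)\subseteq\mathfrak U(k)$, as required.

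The hard part will be the construction in the third paragraph: producing a single honest point $p$ of $L$ out of the family $(p_i)$. The difficulty is structural rather than computational, since a frame homomorphism preserves finite meets and arbitrary joins but not infinite meets, so $p(k)=p\bigl(\bigwedge_i k_i\bigr)$ cannot simply be read off as $\bigwedge_i p(k_i)$, and one must verify that the prime ideals $p_i^{-1}(0)$ are mutually coherent enough to glue into a prime ideal of $L$ lying over $k$. I expect this coherence to be forced precisely by the maximality of the $S$-ultrafilter $\mathfrak U$ (via Proposition~\ref{L-finer} and Proposition~\ref{converges-L}), exactly as the classical Tychonoff theorem extracts a limit from the coherence of an ultrafilter; making that gluing step rigorous, rather than the routine filter-axiom bookkeeping, is the crux of the proof.
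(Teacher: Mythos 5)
Your reduction to Lemma \ref{ul-comp} and the restriction of $\mathfrak U$ to the factors via the stability axiom are fine, but the proposal has two genuine gaps, and the one you flag as the crux is not the real one. First, the ``gluing'' problem you defer largely dissolves: a point is a frame homomorphism $p\colon L\to 2$, so $p^{-1}(0)$ is a prime ideal, in particular a down-set; since $k=\bigwedge_{i\in I}k_i\leqslant k_i$, any limit point $p_i$ of $\mathfrak U(k_i)$ already satisfies $k\in p_i^{-1}(0)$ and is therefore itself a candidate limit point for $\mathfrak U(k)$ --- no coherent family of prime ideals needs to be assembled, and nothing in Proposition \ref{L-finer} or Proposition \ref{converges-L} supplies such a gluing principle anyway. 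Second, and this is the step that actually fails, your convergence verification assumes that every $\mathfrak G$-neighborhood $V$ of $p$ at $k$ contains a finite intersection of restrictions $V_i\cap(\downarrow k)$ of factor neighborhoods. There is no such structure theorem: by definition the $\mathfrak G$-neighborhoods of $p$ at $k$ are the sieves of $J(k)$ contained in $p^{-1}(0)$ (which, by the down-set remark, is all of $J(k)$), and $J$ is an arbitrary Grothendieck topology on $L$, whose covering sieves of the meet $k$ need not be generated by, or related to, restrictions of covering sieves of the $k_i$. Stability gives you the inclusion $V_i\cap(\downarrow k)\in J(k)$ and, via $(F_3)$ and $(F_2)$, membership of such finite intersections in $\mathfrak U(k)$; but to conclude via $(F_1)$ you would need the converse refinement property, that every element of $J(k)$ contains such an intersection, and without it $\mathscr N_p(k)\subseteq\mathfrak U(k)$ is not established.

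For comparison, the paper takes a different (and itself quite terse) route: it does not pass through limit points of the factors at all, but mimics its categorical product construction (Proposition \ref{fil-prod}), assigning to $k=\bigwedge_{i\in I}k_i$ the base $\mathfrak B(k)=\bigl\{\bigcap_{i\in I}S_i\bigr\}$ with $S_i$ the maximal sieve for all but finitely many $i$, and asserting that this is a basis of an ultrafilter on $L$. Your plan, modelled on the classical proof that every ultrafilter on a product space converges, could in principle be completed, but only by adding the missing ingredient: a hypothesis or lemma tying $J(k)$ to the topologies $J(k_i)$ (a ``product topology'' statement for meets), without which the final convergence step cannot go through.
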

\begin{proof}
Suppose we have  chosen a collection $(k_{i})_{i\in I}$  of compact objects in $L$; equivalently, for every family $( \mathfrak U_{i} )_{i\in I}$ of ultrafilters  on   $L$, $\mathfrak U_i(k_i)$ is convergent.
Then the function $\mathfrak B$ which assigns to  each object\linebreak $k=\bigwedge_{i\in I} k_{i}$, the collection of sieves $ \mathfrak B(k)=\Big\{\displaystyle \bigcap_{i\in I}S_i \mid S_i\in \mathfrak F_{i}(k)\Big\}$, where $S_i=t_{k_i}$ is the maximal sieve on $k_i$  except for a finite number of indices, is basis of an ultrafilter on   $L$.
\end{proof}
\begin{ex}
Let $L_{\mathbb R}=\Omega(\mathbb R)$ be the locale of open subsets of the usual topolgy on the set  $\mathbb R$ of real numbers. Let $k_{1}$ be the open interval $(-1,1)\subset \mathbb R $, and let $p: \Omega(\mathbb R)\rightarrow 2$ be a point of $\Omega(\mathbb R)$ such that $p^{-1}(0)$ is the ideal $!(-2,2)$.  
\begin{enumerate}
\item Let $J$ be a Grothendieck topology  on $L_{\mathbb R}$ defined by

$J(k)= \begin{cases}
\big\{!k_{1},\{ (\frac{-1}{r}\,\frac{1}{r})\mid r\in \mathbb Z^{+} \}\big\} \hspace{3.1cm} \text{if}\,\  k= k_{1}\\

\Big\{!k,\big\{!k_{1},\{ (\frac{-1}{r}\,\frac{1}{r})\mid r\in \mathbb Z^{+} \}\big\}\cap (!k)\Big\}\hspace{1cm} \text {if $k\subseteq k_{1}$}\\

\Big\{!k,\{ (\frac{-1}{r}\,\frac{1}{r})\mid r\in \mathbb Z^{+}\}\cup\{\alpha,\beta)\} \  \text{if}\  k_{1}\subseteq(\alpha,\beta)\subseteq k\\

\{!k\} \hspace{6.5cm} \text{if}\,\ k_{1}\nsubseteq k.
\end{cases}
$

\item Let  $\mathcal N$ be a  cover-neighborhood of $(L_{\mathbb R}, J)$ and let $\mathfrak F$ be $S$-filter on  $L_{\mathbb R}$ so that   $\mathcal N_{p}(k_{1})= J(k_{1})\subseteq \mathfrak F(k_{1})$.
\item We may  conclude from propositions \ref{neighbor}, \ref{converges-L} and \ref{closure-L} that $k_{1}=(-1,1)\subset \mathbb R $ is a compact object of $\Omega(\mathbb R)$.
\end{enumerate}
\end{ex}

\end{document}